\documentclass{ifacconf}

\usepackage{graphicx}      %
\usepackage{natbib}        %
\usepackage{tikz}
\usetikzlibrary{shapes,arrows}
\usepackage{amsmath}
\usepackage{thmtools}
\usepackage{amssymb}
\usepackage{amsfonts}
\usepackage{centernot}
\usepackage{arydshln}
\usepackage{float}
\usepackage{optidef}
\usepackage{caption}
\usepackage{subcaption}
\usepackage{algorithm}
\usepackage{algpseudocode}
\usepackage{enumerate}
\usepackage{comment}

\usepackage{pgfplots}
\pgfplotsset{compat=newest}
\newlength\figurewidth \newlength\figureheight
\newcommand{\transp}{^{\mathsf{T}}}

\makeatletter
\let\cl@part\@empty  %
\makeatother
\definecolor{ao(english)}{rgb}{0.0, 0.0, 1.0}
\usepackage[nameinlink]{cleveref}
\crefformat{equation}{\textup{#2(#1)#3}}
\crefrangeformat{equation}{\textup{#3(#1)#4--#5(#2)#6}}
\crefmultiformat{equation}{\textup{#2(#1)#3}}{ and \textup{#2(#1)#3}}
{, \textup{#2(#1)#3}}{, and \textup{#2(#1)#3}}
\crefrangemultiformat{equation}{\textup{#3(#1)#4--#5(#2)#6}}%
{ and \textup{#3(#1)#4--#5(#2)#6}}{, \textup{#3(#1)#4--#5(#2)#6}}{, and \textup{#3(#1)#4--#5(#2)#6}}

\Crefformat{equation}{#2Equation~\textup{(#1)}#3}
\Crefrangeformat{equation}{Equations~\textup{#3(#1)#4--#5(#2)#6}}
\Crefmultiformat{equation}{Equations~\textup{#2(#1)#3}}{ and \textup{#2(#1)#3}}
{, \textup{#2(#1)#3}}{, and \textup{#2(#1)#3}}
\Crefrangemultiformat{equation}{Equations~\textup{#3(#1)#4--#5(#2)#6}}%
{ and \textup{#3(#1)#4--#5(#2)#6}}{, \textup{#3(#1)#4--#5(#2)#6}}{, and \textup{#3(#1)#4--#5(#2)#6}}

\crefdefaultlabelformat{#2\textup{#1}#3}
\newtheorem{theorem}{Theorem}
\newtheorem{proposition}[theorem]{Proposition}

\newtheorem{lemma}{Lemma}

\makeatletter
\def\@ssect#1#2#3#4#5#6{%
  \@tempskipa #4\relax
  \ifdim \@tempskipa>\z@
    \begingroup
      #5{%
        \@hangfrom{\hskip #2}%
          \interlinepenalty \@M \head@format{#1}{#6}\@@par}%
    \endgroup
  \else
    \def\@svsechd{#5{\hskip #2\relax \head@format{#1}{#6}}}%
  \fi
  \@xsect{#4}}
\makeatother

\begin{document}

\tikzstyle{block} = [draw, fill=white, rectangle, 
    minimum height=3em, minimum width=6em]
\tikzstyle{sum} = [draw, fill=white, circle, node distance=1cm]
\tikzstyle{input} = [coordinate]
\tikzstyle{output} = [coordinate]
\tikzstyle{junction} = [coordinate]
\tikzstyle{pinstyle} = [pin edge={to-,thin,black}]

\begin{frontmatter}

\title{Model-Free Disturbance Observer with Online Modification: Listening to MFDOOM} 

\author[First]{Nadav Barak} 
\author[Second]{Christian Grussler} 

\address[First]{Technion – Israel Institute of Technology, Faculty of Mechanical Engineering, Haifa 3200003 Israel (e-mail: nadav.barak@campus.technion.ac.il).}
\address[Second]{Technion – Israel Institute of Technology, The Stephen B. Klein Faculty of Aerospace Engineering, Haifa 3200003 Israel (e-mail: cgrussler@technion.ac.il)}

\begin{abstract}                %
Data-Enabled Predictive Control (DeePC) has recently emerged as a framework for controlling unknown systems from data. However, its performance relies on the relevance of the collected data, and as such, disturbances lead to inevitable errors. This paper addresses this problem by proposing an augmentation of DeePC using Model-Free Disturbance Observer with Online Modification (MFDOOM). The method corrects output predictions based on previous prediction errors using a dedicated continuously updated Hankel matrix. We compare our method, both theoretically and through simulation, to other recent algorithms designed for time-varying systems in the DeePC framework. It is shown that for disturbances that can be modeled as the output of an autonomous linear time-invariant system, this approach can reduce tracking error and online-update burden compared with existing online DeePC variants.
\end{abstract}

\begin{keyword}
Data-Enabled Predictive Control, Autonomous Systems, Disturbance Observer %
\end{keyword}

\end{frontmatter}

\section{INTRODUCTION}
Data-Enabled Predictive Control (DeePC) has gained significant attention as a model-free alternative to Model Predictive Control (MPC) \citep{ODeePC,MDeePC,ChenDeePC2025,elokda2021data,huang2021decentralized,huang2023robust,zieglmeier2025data}. 
Rather than using an explicit model, DeePC predicts future behavior using Hankel matrices constructed from input-output data \citep{DeePC}. 
Because unmodeled or insufficiently represented dynamics can degrade performance, regularization has been proposed to improve robustness, particularly for nonlinear or linear time-varying (LTV) systems \citep{DeePC}. 
Related online variants include Online DeePC (ODeePC), which refreshes the Hankel matrices with new measurements \citep{ODeePC}, and Online Reduced-Order DeePC, here referred to as MDeePC, which updates a mosaic Hankel matrix only when new trajectories enrich the implicit model \citep{MDeePC}.

When disturbances are generated by an autonomous linear system, e.g., sinusoids, they effectively alter the measured plant behavior. 
Since such disturbances may appear or disappear, the resulting prediction error is time-varying. As a result, standard DeePC may under perform, while conservative regularization can degrade nominal tracking \citep{Regularization}. Modifying the nominal behavior matrices (i.e. ODeePC) or expanding them (MDeePC) partially address this issue, but coupling disturbance adaptation to the baseline predictor results in several drawbacks, such as needless noise injection or increased online computation.

\begin{figure}[ht]\centering
\begin{tikzpicture}
	
	\begin{axis}[%
		width=0.85\columnwidth,   %
		height=0.3\columnwidth,  %
		scale only axis,
		xmin=0,
		xmax=15,
		ymin=-3,
		ymax=3.3,
		xlabel={Time [s]},
		ylabel={Velocity $[rad/s]$},
		axis background/.style={fill=white},
		axis x line*=bottom,
		axis y line*=left,
		xmajorgrids,
		ymajorgrids,
		legend style={
			at={(0.45,1.05)},
			anchor=south,
			legend columns=-1,
			legend cell align=left,
			align=left,
			draw=white!15!black
		}
		]
		\addplot [color=black]
		table[row sep=crcr]{%
			0	0\\
			0.300000000000001	7.93328069903509e-10\\
			0.4	0.309016850580837\\
			0.449999999999999	0.453990288550745\\
			0.5	0.587784978604651\\
			0.550000000000001	0.707106452255889\\
			0.6	0.809016617406737\\
			0.65	0.891006109952157\\
			0.699999999999999	0.951056072898103\\
			0.75	0.987687879211414\\
			0.800000000000001	0.999999534603861\\
			0.85	0.987687881358799\\
			0.9	0.951056073502235\\
			0.949999999999999	0.891006109478651\\
			1	0.809016617241598\\
			1.05	0.707106451874125\\
			1.1	0.587784978447669\\
			1.15	0.453990287710598\\
			1.25	0.156434392028871\\
			1.4	-0.309016850333803\\
			1.45	-0.453990288607482\\
			1.5	-0.58778497892539\\
			1.55	-0.707106451915733\\
			1.6	-0.809016617808386\\
			1.65	-0.891006110022918\\
			1.7	-0.951056074089937\\
			1.75	-0.987687880155987\\
			1.8	-0.999999534125777\\
			1.85	-0.987687880662437\\
			1.9	-0.951056072919503\\
			1.95	-0.891006109694265\\
			2	-0.809016617994674\\
			2.05	-0.707106452271519\\
			2.1	-0.587784978871094\\
			2.15	-0.453990288000256\\
			2.25	-0.156434392021627\\
			2.4	0.309016850631107\\
			2.45	0.453990288520828\\
			2.5	0.587784979121428\\
			2.55	0.707106451512455\\
			2.6	0.809016617394471\\
			2.65	0.891006109646179\\
			2.7	0.951056073954726\\
			2.75	0.987687880819145\\
			2.8	0.999999535130739\\
			2.85	0.98768788006749\\
			2.9	0.951056074238283\\
			2.95	0.891006108799756\\
			3	0.809016617464085\\
			3.05	0.707106452272733\\
			3.1	0.58778497835878\\
			3.15	0.453990288366192\\
			3.25	0.156434391974644\\
			3.4	-0.309016850563212\\
			3.45	-0.453990288356193\\
			3.5	-0.587784979090898\\
			3.55	-0.707106452273621\\
			3.6	-0.80901661804902\\
			3.65	-0.891006109724582\\
			3.7	-0.951056072453602\\
			3.75	-0.987687880993255\\
			3.8	-0.999999534115629\\
			3.85	-0.987687881415065\\
			3.9	-0.951056073948337\\
			3.95	-0.891006108953361\\
			4	-0.809016618239628\\
			4.05	-0.70710645193887\\
			4.1	-0.587784978501473\\
			4.15	-0.453990287026913\\
			4.25	-0.15643439187685\\
			4.4	0.309016850600848\\
			4.45	0.453990288105526\\
			4.5	0.587784978345372\\
			4.55	0.707106452448482\\
			4.6	0.809016618129526\\
			4.65	0.891006109900628\\
			4.7	0.951056072219137\\
			4.75	0.987687880309316\\
			6.75	2.98768693945105\\
			6.8	0.99999954459954\\
			7.75	0.999999530045235\\
			7.8	5.07690423035001e-09\\
			14.2	0\\
		};
		\addlegendentry{MPC}
		
		\addplot [color=orange, dotted, very thick]
		table[row sep=crcr]{%
			0	0.00243312504747273\\
			0.0500000000000007	-0.0313748739567146\\
			0.15	-0.00520753372268068\\
			0.199999999999999	-0.0166224096566232\\
			0.25	-0.0197909021542948\\
			0.300000000000001	-0.00428982396397792\\
			0.35	0.133868820891403\\
			0.4	0.258384813502742\\
			0.449999999999999	0.46183833153481\\
			0.5	0.555383281587227\\
			0.550000000000001	0.730313942922068\\
			0.6	0.791044094501869\\
			0.65	0.864450015057148\\
			0.699999999999999	0.964757724425029\\
			0.75	1.00377084452387\\
			0.800000000000001	0.959620952877565\\
			0.85	0.940684393706455\\
			0.9	0.910268744042448\\
			0.949999999999999	0.892250560365818\\
			1	0.830538841678987\\
			1.05	0.699050827879965\\
			1.1	0.549443820257567\\
			1.15	0.467101867470481\\
			1.2	0.341592989354494\\
			1.25	0.127343235272258\\
			1.3	0.0326642053386816\\
			1.35	-0.182455499251985\\
			1.4	-0.134775506682221\\
			1.45	-0.146685666198634\\
			1.5	-0.207754952956428\\
			1.55	-0.264055301129517\\
			1.6	-0.304715735156721\\
			1.65	-0.489475455894844\\
			1.7	-0.583860128175241\\
			1.75	-1.51314846640953\\
			1.8	-1.34876052491996\\
			1.85	-1.12359872827229\\
			1.9	-1.0279749937332\\
			1.95	-0.948368241162754\\
			2.05	-0.667859347850369\\
			2.1	-0.590242454713442\\
			2.15	-0.408264473842051\\
			2.2	-0.31017934133766\\
			2.25	-0.184065303938008\\
			2.4	0.339307157727399\\
			2.45	0.482104817109811\\
			2.5	0.598759318308284\\
			2.6	0.799879504614676\\
			2.65	0.877907464301456\\
			2.7	0.996246268617323\\
			2.75	1.02009179725651\\
			2.8	1.00060641020406\\
			2.85	1.04026366495205\\
			2.9	0.958930610778486\\
			2.95	0.95269155419901\\
			3.05	0.740112315448124\\
			3.1	0.582045725972947\\
			3.15	0.490529397837772\\
			3.2	0.316106790436896\\
			3.25	0.154352381621965\\
			3.35	-0.214525875432861\\
			3.4	-0.402881368030011\\
			3.45	-0.531895164626977\\
			3.5	-0.56446901438785\\
			3.55	-0.748897658697047\\
			3.6	-0.873634068738687\\
			3.65	-0.954031014468677\\
			3.7	-0.961036667123588\\
			3.75	-1.00896171914272\\
			3.8	-0.997954769253385\\
			3.85	-0.971266583502519\\
			3.9	-0.928147586807937\\
			3.95	-0.866830612567641\\
			4	-0.810538382105239\\
			4.05	-0.717718536711178\\
			4.1	-0.584287132780915\\
			4.15	-0.506318176155821\\
			4.2	-0.397519114147167\\
			4.3	-0.034972982276555\\
			4.35	0.096528734194381\\
			4.45	0.438334519840915\\
			4.5	0.602691488806689\\
			4.55	0.731160199547386\\
			4.6	0.850628576528917\\
			4.7	1.07115839155481\\
			4.75	1.11247649528787\\
			4.8	1.18620399764489\\
			4.85	1.16029341710518\\
			4.9	1.12738612102749\\
			4.95	1.12783207149539\\
			5	1.05632001755619\\
			5.05	0.995627175842843\\
			5.1	0.859744453511064\\
			5.15	0.793366316871548\\
			5.2	0.652326246675562\\
			5.25	0.64946628410544\\
			5.3	0.618918509370014\\
			5.4	0.684747797545674\\
			5.45	0.778222455370539\\
			5.5	0.974674354507659\\
			5.55	1.18438773484698\\
			5.6	1.43676853122441\\
			5.7	1.83944520887482\\
			5.75	1.95351904136281\\
			5.8	2.13016271304285\\
			5.85	2.25394633768917\\
			5.9	2.34577506848253\\
			6	2.31048688055861\\
			6.05	2.2482134899553\\
			6.1	2.26377131819867\\
			6.15	2.32547335926216\\
			6.2	2.30672132309364\\
			6.25	2.32271371932188\\
			6.3	2.32464332895852\\
			6.35	2.40092291819649\\
			6.4	2.496647643421\\
			6.45	2.66866339663255\\
			6.5	2.7239936650185\\
			6.55	2.81811255552152\\
			6.6	2.8565923816824\\
			6.65	2.79519980823668\\
			6.7	2.66015655768382\\
			6.75	2.38789108941468\\
			6.85	1.44932356709913\\
			6.95	0.970068182554797\\
			7	0.819623353512155\\
			7.05	0.772938265751888\\
			7.1	0.796776131167867\\
			7.15	0.784118001415548\\
			7.2	0.944107374472635\\
			7.25	1.0162506304014\\
			7.3	1.06556641074428\\
			7.35	1.05831465383154\\
			7.4	1.06793727012085\\
			7.45	1.02860982512018\\
			7.5	1.04265737467664\\
			7.55	1.02301406457089\\
			7.6	0.937829092077028\\
			7.65	0.890377457293948\\
			7.7	0.766252822072621\\
			7.75	0.566588577262698\\
			7.8	0.322291984127345\\
			7.85	0.195594274540809\\
			7.9	0.053033702336732\\
			7.95	-0.0444006702710382\\
			8	-0.0625456658002239\\
			8.05	-0.0605425814868319\\
			8.1	-0.100585472024322\\
			8.15	-0.0632181801188718\\
			8.2	-0.0195137550755486\\
			8.25	-0.0139717514099491\\
			8.3	0.0339833233518583\\
			8.35	0.0104149606992401\\
			8.4	-0.0259514267055163\\
			8.45	-0.067417472224184\\
			8.5	-0.044941345778005\\
			8.55	-0.0527710691227625\\
			8.6	-0.0420005180296634\\
			8.65	-0.0161012421257407\\
			8.7	-0.0343977613307835\\
			8.75	-0.0191612019842964\\
			8.8	-0.027086307364165\\
			8.85	-0.0233477668640081\\
			8.9	-0.0905962084883143\\
			8.95	-0.104376136654521\\
			9	-0.131829469809816\\
			9.05	-0.107490671103999\\
			9.15	-0.0756626025868794\\
			9.2	-0.0443310104838766\\
			9.25	-0.0267198176485728\\
			9.3	-0.0421741936231115\\
			9.35	-0.063741303533611\\
			9.4	-0.0422040532016883\\
			9.45	-0.0266057380308666\\
			9.5	-0.0183285516687182\\
			9.6	0.0616044960235982\\
			9.65	0.00798498051527297\\
			9.7	-0.0296276078933744\\
			9.8	-0.024670011082371\\
			9.85	-0.0568244939835996\\
			9.9	-0.066105223182241\\
			9.95	-0.0413018591008836\\
			10	-0.0197869126851522\\
			10.05	-0.0272429618873726\\
			10.1	-0.0067272368243021\\
			10.15	0.00277648238879991\\
			10.2	-0.0166078831494971\\
			10.25	-0.0162464257567176\\
			10.3	-0.00598687597666725\\
			10.35	-0.0371439159044442\\
			10.4	-0.101787919983359\\
			10.45	-0.0989097440534881\\
			10.5	-0.0743642854120541\\
			10.55	0.00829363714593079\\
			10.6	0.00787094015000633\\
			10.7	-0.00615969108713799\\
			10.75	-0.0281208907775898\\
			10.8	-0.0143970948854903\\
			10.85	0.0337875134136514\\
			10.9	0.0950687712086502\\
			10.95	0.0672924097203342\\
			11	0.019490214170375\\
			11.1	-0.0512233912782314\\
			11.15	-0.062019003333722\\
			11.2	-0.0879439521628758\\
			11.25	-0.0524362513066468\\
			11.3	-0.0546572887080483\\
			11.35	-0.0837231315624951\\
			11.4	-0.0969770661974181\\
			11.45	-0.0500351165585879\\
			11.5	0.0162751548708826\\
			11.55	0.00809264963813838\\
			11.6	0.0168351334908419\\
			11.65	-0.00428428454731922\\
			11.7	-0.0159227292791098\\
			11.75	-0.00597999279755612\\
			11.8	-0.0823505929498367\\
			11.85	0.131635330210704\\
			11.9	0.284216119955627\\
			11.95	0.341286976280191\\
			12	0.415317194153108\\
			12.05	0.495265268542289\\
			12.1	0.506742875846589\\
			12.15	0.582762367187094\\
			12.2	0.918427440061851\\
			12.25	1.56334951550541\\
			12.3	2.11236832582586\\
			12.35	2.12418990058559\\
			12.4	1.64441705330985\\
			12.45	1.10244065204988\\
			12.5	0.886033707171165\\
			12.55	0.771959195011911\\
			12.6	0.54045312036687\\
			12.65	0.202764000853939\\
			12.7	-0.0279130944221055\\
			12.75	-0.0460500760321256\\
			12.8	0.139751437310988\\
			12.85	0.431109026655658\\
			12.9	0.49706696084643\\
			12.95	0.373270106447462\\
			13	0.369561638832828\\
			13.05	0.44146525913173\\
			13.1	0.580562433389208\\
			13.15	0.659806832710688\\
			13.2	0.717144753807766\\
			13.25	0.660812354688296\\
			13.3	0.499692673748742\\
			13.4	-0.149585462434876\\
			13.45	-0.222531683654992\\
			13.5	-0.217661384187975\\
			13.55	-0.0537932352996364\\
			13.6	0.0461185051280264\\
			13.65	0.054670205149197\\
			13.7	-0.0136410152317463\\
			13.75	0.043183175540344\\
			13.8	0.179849345893185\\
			13.85	0.240534414240106\\
			13.9	0.237116136504698\\
			13.95	0.126941161609942\\
			14	0.129252313056252\\
			14.05	0.0916968095136497\\
			14.1	0.083190796454538\\
			14.15	0.0994009256129456\\
			14.2	0.00148423210369764\\
		};
		\addlegendentry{ODeePC}
		
		\addplot [color=cyan, dashed, very thick]
		table[row sep=crcr]{%
			0	-0.000469550622907988\\
			0.300000000000001	-1.29721913850744e-05\\
			0.4	0.308398011108396\\
			0.449999999999999	0.453084071982495\\
			0.5	0.587502625870004\\
			0.550000000000001	0.706960027229457\\
			0.6	0.808482053743393\\
			0.65	0.889916145384101\\
			0.699999999999999	0.950107934630733\\
			0.75	0.987265223303281\\
			0.800000000000001	0.999080576199205\\
			0.85	0.986484039600798\\
			0.9	0.949900575461371\\
			0.949999999999999	0.889892944191393\\
			1	0.807943380446073\\
			1.05	0.706075375394979\\
			1.1	0.58680959108948\\
			1.15	0.453132573996239\\
			1.25	0.15582215290533\\
			1.35	-0.156771237489307\\
			1.4	-0.167360489907503\\
			1.45	-0.182722477042116\\
			1.5	-0.213452906512799\\
			1.55	-0.266350046121406\\
			1.6	-0.344976112918005\\
			1.65	-0.455004365920741\\
			1.7	-0.589500100770842\\
			1.75	-2.59822193996438\\
			1.8	-1.9528845445964\\
			1.85	-1.81487113727843\\
			1.9	-0.50846823575014\\
			2	-0.197545783345099\\
			2.05	0.470994051838705\\
			2.1	0.304629596433315\\
			2.15	0.1041954384983\\
			2.2	-0.144763582124764\\
			2.25	-0.468355595930248\\
			2.3	-0.28829311143515\\
			2.35	-0.0576805363606994\\
			2.4	0.14238594553993\\
			2.45	0.280676645967912\\
			2.5	0.406437672253684\\
			2.55	0.652209736343908\\
			2.6	0.924853755091421\\
			2.65	1.0010105598143\\
			2.7	1.02080685574478\\
			2.75	1.05116391383766\\
			2.8	1.06947459794765\\
			2.85	1.0495434642551\\
			2.9	1.01238888476036\\
			2.95	0.954080091277644\\
			3	0.757683443486634\\
			3.15	0.246167514401961\\
			3.2	0.0893362800435344\\
			3.25	-0.05656375085481\\
			3.3	-0.190936488210815\\
			3.35	-0.311983289372217\\
			3.4	-0.423742058333429\\
			3.45	-0.499392235122441\\
			3.5	-0.583983232324284\\
			3.55	-0.693319188917073\\
			3.6	-0.785692249674316\\
			3.65	-0.857935848230717\\
			3.7	-0.911052331228481\\
			3.75	-0.947200194098752\\
			3.8	-0.966484621336463\\
			3.85	-0.966104399462974\\
			3.9	-0.942459936453174\\
			3.95	-0.890256670496138\\
			4	-0.769736899845411\\
			4.05	-0.664254704188409\\
			4.1	-0.540827856274426\\
			4.15	-0.404704429577341\\
			4.25	-0.109039652567885\\
			4.4	0.346156932916823\\
			4.45	0.489113824418657\\
			4.5	0.622583720709315\\
			4.55	0.743276661327529\\
			4.6	0.799192819907091\\
			4.65	0.878832360689456\\
			4.7	0.937181902047618\\
			4.75	0.971945317288654\\
			5	1.2099818670581\\
			5.1	1.33139206722127\\
			5.25	1.46949028942233\\
			5.35	1.56891977481691\\
			5.4	1.59960234707871\\
			5.45	1.64311866411849\\
			5.65	1.84036785504604\\
			5.75	1.94821539762129\\
			5.85	2.06420761953715\\
			6	2.2408434466848\\
			6.05	2.29247381173029\\
			6.1	2.29729559668707\\
			6.2	2.38922784630816\\
			6.3	2.49087749936358\\
			6.4	2.59838841916295\\
			6.45	2.66456574174283\\
			6.5	2.75553570930102\\
			6.7	2.95807245209536\\
			6.75	3.00973161330287\\
			6.8	1.02431508798856\\
			6.95	1.01718098218987\\
			7	1.01773111154459\\
			7.1	1.00890090713126\\
			7.15	0.971695635297884\\
			7.3	0.977870498016042\\
			7.4	0.978746852728055\\
			7.45	0.981147204423177\\
			7.5	0.996844846036588\\
			7.55	1.01596855403858\\
			7.7	1.01663043472857\\
			7.75	1.01692734541581\\
			7.8	0.0175366245680806\\
			8.05	0.0128033723384675\\
			8.1	-0.0195544160856489\\
			8.45	-0.0165194300778939\\
			8.5	-0.00648350905305506\\
			8.55	0.0175297700716488\\
			8.75	0.0170074929134607\\
			8.95	0.0176620073833345\\
			9	-0.0120851257472747\\
			9.05	-0.0178184988574017\\
			9.45	-0.018249391007739\\
			9.5	0.0126847055522923\\
			9.55	0.0182027834677072\\
			9.95	0.0186053412457809\\
			10	-0.0131051446251149\\
			10.05	-0.0185318619043855\\
			10.45	-0.0189185786867867\\
			10.5	0.01352991743126\\
			10.55	0.0188195505141433\\
			10.95	0.0191932536278507\\
			11	-0.013898088037001\\
			11.05	-0.019073299747502\\
			11.45	-0.0194375034870635\\
			11.5	0.0142225258464794\\
			11.55	0.0192873541434846\\
			11.75	0.019567500892526\\
			11.8	-0.122279980386592\\
			11.85	0.0181794343500172\\
			11.9	0.34121942736752\\
			11.95	0.334356906469853\\
			12.05	-0.07458030577763\\
			12.1	-0.177268650425916\\
			12.15	-0.198893436011959\\
			12.2	-0.204892572363757\\
			12.25	-0.134892761192775\\
			12.3	0.0224024585549998\\
			12.35	0.167422900539881\\
			12.4	0.15440374297801\\
			12.45	0.0963564481644976\\
			12.5	0.00892032302399137\\
			12.55	-0.0194067306785453\\
			12.6	-0.00413380710901556\\
			12.65	0.0829948203932283\\
			12.7	0.133892854673693\\
			12.75	0.0933938997619759\\
			12.8	-0.00340865591277506\\
			12.85	0.0151752461453736\\
			12.9	-0.0147351550214161\\
			12.95	-0.0150325003573091\\
			13	-0.00881194374083094\\
			13.05	0.0197734941630454\\
			13.15	0.0177531448178776\\
			13.25	0.0168281351310053\\
			13.3	0.0098108317040797\\
			13.35	-0.0217258438649974\\
			13.45	-0.0128230492857124\\
			13.5	-0.00543601174338093\\
			13.55	0.0106469047832594\\
			13.6	0.00935221359318028\\
			13.65	0.000832884120915978\\
			13.7	-0.018598380026754\\
			13.75	-0.0190557939687626\\
			13.8	-0.0160665968320952\\
			13.85	-0.00159226816290747\\
			13.9	-0.00414024431023741\\
			13.95	0.00814016769450809\\
			14	0.00400998293468646\\
			14.05	-0.0125473166754233\\
			14.15	-0.0146984492223527\\
			14.2	-0.0139505781462734\\
		};
		\addlegendentry{MDeePC}
		
		\addplot [color=magenta, dashdotted, very thick]
		table[row sep=crcr]{%
			0	0\\
			0.300000000000001	-1.29149185568167e-07\\
			0.4	0.309010336898464\\
			0.449999999999999	0.453981007303543\\
			0.5	0.587781965470461\\
			0.550000000000001	0.707104918428088\\
			0.6	0.80901110538149\\
			0.65	0.89099506105558\\
			0.699999999999999	0.951046538975483\\
			0.75	0.9876835031941\\
			0.800000000000001	0.999990210480213\\
			0.85	0.987675730734294\\
			0.9	0.951044358478965\\
			0.949999999999999	0.890994531237808\\
			1	0.809005735038729\\
			1.05	0.707095914080753\\
			1.1	0.587775028314065\\
			1.15	0.453981011364545\\
			1.25	0.156426608077899\\
			1.35	-0.156440183874329\\
			1.4	-0.167159798583834\\
			1.45	-0.182650701707125\\
			1.5	-0.213507649369074\\
			1.55	-0.26652815313596\\
			1.6	-0.345264128285306\\
			1.65	-0.449474532519845\\
			1.7	-0.574965284953924\\
			1.75	-0.713851954351195\\
			1.8	-0.860656224555543\\
			1.85	-0.986151840135305\\
			1.9	-0.95099716133975\\
			1.95	-0.890967031851806\\
			2	-0.808987966008399\\
			2.05	-0.70708438888923\\
			2.1	-0.587767704143021\\
			2.15	-0.453976759593752\\
			2.25	-0.156426662208732\\
			2.4	0.30901809119413\\
			2.45	0.45399020486118\\
			2.5	0.587783933051046\\
			2.55	0.707104515122944\\
			2.6	0.809013655780557\\
			2.65	0.891001976279538\\
			2.7	0.951050577797425\\
			2.75	0.987680915993764\\
			2.8	0.999991185728273\\
			2.85	0.987678737034919\\
			2.9	0.951046667837556\\
			2.95	0.89099673235434\\
			3	0.809007200957696\\
			3.05	0.707097144094767\\
			3.1	0.587775794858167\\
			3.15	0.453981118377174\\
			3.25	0.156425804970718\\
			3.4	-0.309022746951259\\
			3.45	-0.453994676810327\\
			3.5	-0.587787646067058\\
			3.55	-0.707107221372734\\
			3.6	-0.80901545825129\\
			3.65	-0.891003111198813\\
			3.7	-0.951051373984436\\
			3.75	-0.98768168355882\\
			3.8	-0.999992112128528\\
			3.85	-0.987679489090642\\
			3.9	-0.951047014233875\\
			3.95	-0.89099672323675\\
			4	-0.809007225828491\\
			4.05	-0.707097346824151\\
			4.1	-0.587776486379632\\
			4.15	-0.453982480624243\\
			4.25	-0.156428333668321\\
			4.4	0.309020480487488\\
			4.45	0.453993200078321\\
			4.5	0.587787105248681\\
			4.55	0.707107653242616\\
			4.6	0.809016613140187\\
			4.65	0.891004415757479\\
			4.7	0.951052151844744\\
			4.75	0.987681581290106\\
			6.75	2.98767675778653\\
			6.8	0.99999131750444\\
			7.75	0.999991096837995\\
			7.8	-6.96664003285719e-06\\
			11.75	5.96556053800157e-06\\
			11.8	-0.141857124155292\\
			11.85	-0.00150296308860476\\
			11.9	0.0243007044075405\\
			11.95	8.00642663989493e-05\\
			14.2	8.60499849153484e-08\\
		};
		\addlegendentry{MFDOOM}
		
		\addplot [color=black, forget plot]
		table[row sep=crcr]{%
			1.25	-3\\
			1.25	4\\
		};
		\addplot [color=white!15!black, forget plot]
		table[row sep=crcr]{%
			11.75	-3\\
			11.75	4\\
		};
		
		\addplot[area legend, draw=none, fill=white!90!black, fill opacity=0.3, forget plot]
		table[row sep=crcr] {%
			x	y\\
			1.25	-3\\
			11.75	-3\\
			11.75	4\\
			1.25	4\\
		}--cycle;
	\end{axis}
\end{tikzpicture}%
\caption{Simulated output response of an undisturbed \cref{eqn:sim} controlled by MPC, compared to ODeePC, MDeePC and MFDOOM controlling the same system under disturbance (shaded region).}
\label{fig:output_response}
\end{figure}

This paper introduces the Model-Free Disturbance Observer with Online Modification (MFDOOM), a model-free correction mechanism for autonomous LTV disturbances (ALTVDs). 
MFDOOM compares DeePC predictions with measured outputs and maintains a continuously updated Hankel matrix of prediction errors, which implicitly models the error dynamics for future compensation. 
Unlike methods that update the nominal behavior matrices, MFDOOM keeps the baseline DeePC predictor fixed and adapts only the correction mechanism. 
As illustrated in \Cref{fig:output_response}, this enables improved tracking under sinusoidal disturbances while requiring less real-time data updating than existing online DeePC variants.

The remainder of the paper is organized as follows. 
\Cref{sec:prelim} introduces preliminaries, \Cref{sec:problem} states the problem and elaborates on the limitations of existing DeePC variants, \Cref{sec:MFDOOM} introduces MFDOOM, and \Cref{sect: simulation} demonstrates its performance on a DC-motor example before \Cref{sec:concl} concludes the paper.

\section{PRELIMINARIES}\label{sec:prelim}
\subsection{Notations}
We denote the matrix of stacked $x_1, \dots, x_T\in \mathbb{R}^{n\times m}$ by
            \begin{equation*}
                \text{col}(x_1,\dots,x_T):=\begin{bmatrix}
                x_1\\ \vdots \\ x_T
            \end{bmatrix} \in \mathbb{R}^{nT \times m}
            \end{equation*}
and the order $L$ \emph{Hankel matrix} corresponding to $x := \text{col}(x_1,\dots,x_T)$ by \begin{equation*}
\mathcal{H}_L(x):=\begin{bmatrix}
            x_1&x_2&\dots&x_{T-L+1}\\
            x_2&x_3&\dots&x_{T-L+2}\\
            \vdots&\vdots&\dots&\vdots\\
            x_L&x_{L+1}&\dots&x_{T}\\
        \end{bmatrix}.
    \end{equation*}  
$\text{shift}(x,u):= \text{col}(x_2,\dots, x_T,u)$ stands for the \emph{shift operator} acting on $x$ and $u \in \mathbb{R}^{n\times m}$ and $\text{colspan}(M) \subset \mathbb{R}^{n}$ for the range of $M \in \mathbb{R}^{n \times m}$. Variables of optimization problems are stylized in bold (e.g., $\mathbf{g}$), and their optimal solution is denoted by an asterisk (e.g., $g^*$).

\subsection{Behavioral System Theory}
\emph{Behavioral system theory} describes a system as the subspace of the signal space in which trajectories of the system lie. A 'complete' linear time-invariant (LTI) system
\begin{equation}\label{eqn:LTI}
    \begin{aligned}
\bar{x}_{k+1}&=A\bar{x}_k+Bu_k, \; A\in\mathbb{R}^{n\times n},\; u_k\in\mathbb{R}^m,\; B\in\mathbb{R}^{n\times m} \\
    \bar{y}_k&=C\bar{x}_k+Du_k, \; C\in\mathbb{R}^{p\times n},\; D\in \mathbb{R}^{p\times m} 
    \end{aligned}
\end{equation}
has a signal space $\mathbb{W}\equiv\mathbb{R}^{m+p}$. Following \cite{DeePC}, we will denote the system by its behavior $\mathbb{B}$ and use $\mathbb{B}_T$ for the space of all its trajectories of length $T$. The order of an LTI system's minimal representation is denoted by $\mathbf{n}(\mathbb{B})$. Further,  the observability matrix of \cref{eqn:LTI} is given by $\mathcal{O}_l := \text{col}(C,CA,\dots,C^{l-1})\in\mathbb{R}^{lp\times n}$ and the system's lag $\mathbf{l}(\mathbb{B})$ is defined by $$\mathbf{l}(\mathbb{B}):=\min \{l\in\mathbb{N}:rank(\mathcal{O}_l)=\mathbf{n}(\mathbb{B})\}.$$
Letting the collected input/output data from system $\mathbb{B}$ be denoted by \begin{align*}
u_{ini}&:=\text{col}(u_1,u_2,\dots,u_{T_{ini}}),\; u_k\in\mathbb{R}^{m},\\
y_{ini}&:=\text{col}(y_1,y_2,\dots,y_{T_{ini}}),\; y_k\in\mathbb{R}^{p},
\end{align*}
it follows that for a sufficiently long window $T_{ini}>\mathbf{l}(\mathbb{B})$, the state to which the system is driven by the sequence of inputs $u_{ini}$ is unique \citep{DeePC}. The signal $u=\text{col}(u_1,u_2,\dots,u_T)$, $u_k\in\mathbb{R}^{m}$ is said to be persistently exciting (PE) of order $L$ if $\mathcal{H}_L(u)$ is of full row rank \citep{Persistency}. By the so-called \emph{Willems' Fundamental Lemma} \citet[Theorem~1]{Persistency}, every trajectory of length $t$ in $\mathbb{B}_t$ can be written as a linear combination of the columns of $\mathcal{H}_t(w)$ (see \Cref{lemma: Willems}). 
\begin{lemma}\label{lemma: Willems}
Consider a controllable system $\mathbb{B}$. Let $T, t \in \mathbb{Z}_{>0}$, and $w = \text{col}(u, y) \in \mathbb{B}_T$. Assume $u$ is persistently exciting of order $L=t+\mathbf{n}(\mathbb{B})$. Then $\text{colspan}(\mathcal{H}_t(w)) = \mathbb{B}_t$.
\end{lemma}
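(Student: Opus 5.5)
To prove the lemma I would show the two inclusions $\text{colspan}(\mathcal{H}_t(w)) \subseteq \mathbb{B}_t$ and $\mathbb{B}_t \subseteq \text{colspan}(\mathcal{H}_t(w))$ separately.

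\emph{First inclusion.} This is immediate from time invariance and linearity. Each column of $\mathcal{H}_t(w)$ is a length-$t$ window $\text{col}(w_j,\dots,w_{j+t-1})$ of the trajectory $w\in\mathbb{B}_T$. By shift invariance this window lies in $\mathbb{B}_t$, and since $\mathbb{B}_t$ is a linear subspace, every linear combination of such windows lies in it too.

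\emph{Second inclusion: reduction to a rank count.} In a minimal state-space realization \cref{eqn:LTI} of order $n=\mathbf{n}(\mathbb{B})$, every $w'\in\mathbb{B}_t$ is determined by its initial state and its input sequence through the linear map
\begin{equation*}
w' = M\begin{bmatrix}\bar{x}_1\\ u'\end{bmatrix}, \qquad \dim \mathbb{B}_t \le mt+n .
\end{equation*}
Hence it suffices to prove
\begin{equation*}
\text{rank}\,\mathcal{H}_t(w) = mt+n .
\end{equation*}
Then the first inclusion forces equality of the two spaces, and it also shows $\dim\mathbb{B}_t=mt+n$. The columns of $\mathcal{H}_t(w)$ equal $M$ applied to the columns of $\begin{bmatrix}X\\ \mathcal{H}_t(u)\end{bmatrix}$, where $X=[\bar{x}_1,\dots,\bar{x}_{T-t+1}]$ is the state sequence. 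When $t$ exceeds the lag $\mathbf{l}(\mathbb{B})$, the map $M$ is injective, because observability recovers $\bar{x}_1$ from the outputs once the input is known. So the key claim becomes
\begin{equation*}
\text{rank}\begin{bmatrix}X\\ \mathcal{H}_t(u)\end{bmatrix} = n+mt .
\end{equation*}
If $t$ is small, I would apply the argument with a larger depth and then truncate rows, which preserves both column-span inclusions.

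\emph{Key claim: left-kernel argument.} This is the main obstacle and needs both controllability and persistency of excitation of order $t+n$. Suppose $[\xi\transp,\eta\transp]$ annihilates $\begin{bmatrix}X\\ \mathcal{H}_t(u)\end{bmatrix}$. I would extend to the deeper Hankel matrix $\mathcal{H}_{t+n}(w)$ and use the state equation to express $\bar{x}_{k+s}=A^s\bar{x}_k+\sum_{i<s} A^{s-1-i}Bu_{k+i}$. This generates $n+1$ shifted annihilators of the deeper stacked matrix $\begin{bmatrix}X\\ \mathcal{H}_{t+n}(u)\end{bmatrix}$. By Cayley--Hamilton, a suitable combination of them with the characteristic polynomial coefficients of $A$ eliminates the state component. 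This leaves a vector annihilating $\mathcal{H}_{t+n}(u)$ alone, which by persistency of excitation must be zero. Unwinding that identity gives $\xi\transp A^{i}B=0$ for all $i$. By controllability $\xi=0$, and then $\eta\transp\mathcal{H}_t(u)=0$ gives $\eta=0$ by persistency of excitation (order $t+n\ge t$).

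The delicate part is the bookkeeping of the shifted annihilators and the Cayley--Hamilton coefficients, which I would follow as in \citet[Theorem~1]{Persistency}.
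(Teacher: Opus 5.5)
The paper gives no proof of this lemma; it imports it from \citet[Theorem~1]{Persistency}. Your key claim is the standard argument behind that theorem, and it is sound: full row rank of $\begin{bmatrix}X\\ \mathcal{H}_t(u)\end{bmatrix}$, obtained from $n+1$ shifted annihilators of the depth-$(t+n)$ stacked matrix, elimination of the state part by Cayley--Hamilton, persistency of excitation, and then controllability. One detail of the unwinding: the trailing $t$ input blocks give $\eta=0$ first, by a triangular recursion with unit leading coefficient $\alpha_n=1$. Only then do the leading $n$ blocks give $\xi\transp A^jB=0$ for $j<n$. So your final appeal to persistency of excitation to get $\eta=0$ is redundant but harmless.

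The step that would fail is your treatment of small $t$. You reduce to $\text{rank}\,\mathcal{H}_t(w)=mt+n$, which is false whenever $t<\mathbf{l}(\mathbb{B})$, since then $\dim\mathbb{B}_t=mt+\text{rank}\,\mathcal{O}_t<mt+n$. Your repair runs the argument at a larger depth $t'\ge\mathbf{l}(\mathbb{B})$ and truncates rows. But that needs $u$ to be persistently exciting of order $t'+\mathbf{n}(\mathbb{B})>t+\mathbf{n}(\mathbb{B})$, which the lemma does not assume. For instance, if $T$ has the minimal length $(m+1)(t+n)-1$, the matrix $\mathcal{H}_{t'+n}(u)$ has fewer columns than rows. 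The fix is to drop the injectivity of $M$ entirely. You have $\mathcal{H}_t(w)=M\begin{bmatrix}X\\ \mathcal{H}_t(u)\end{bmatrix}$, and by your key claim the right factor has full row rank $n+mt$; that claim holds for every $t$ under persistency of excitation of order $t+n$. Hence $\text{colspan}(\mathcal{H}_t(w))=M\,\mathbb{R}^{n+mt}=\mathbb{B}_t$ directly, with no appeal to the lag, to observability, or to a deeper Hankel matrix.
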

In particular, it is possible to link recorded trajectory data to feasible input-output pairs $(u,y)$. To see this, let $T$ be the total number of data points recorded and $T_{ini}\geq \mathbf{l}(\mathbb{B})$ and assume that $u$ and $y$ are of length $N$. To ensure that \Cref{lemma: Willems} holds for trajectories of length $T_{ini}+N$, we require persistency of excitation of order $T_{ini}+N+\mathbf{n}(\mathbb{B})$, which leads to the condition \begin{equation}\label{eqn:T size}
    T\geq (m+1)(T_{ini}+N+\mathbf{n}(\mathbb{B}))-1.
\end{equation}
Assume system $\mathbb{B}$ is driven by a PE input $u^d=\text{col}(u^d_1,u^d_2,\dots u^d_T)$, with corresponding output $y^d=\\\text{col}(y^d_1,y^d_2,\dots y^d_T)$, we can then construct from $u^d$ a Hankel matrix partitioned to the first $T_{ini}$ and last $N$ rows as follows:
\begin{equation}
    U:=\mathcal{H}_{(T_{ini}+N)}(u^d)=\begin{bmatrix}
        U_p^{T_{ini}\times (T-(T_{ini}+N)+1)}\\
        U_f^{N\times (T-(T_{ini}+N)+1)}\\
    \end{bmatrix}
\end{equation}
Similarly, we construct from $y^d$ a Hankel matrix:
\begin{equation}
    Y:=\mathcal{H}_{(T_{ini}+N)}(y^d)=\begin{bmatrix}
        Y_p^{T_{ini}\times (T-(T_{ini}+N)+1)}\\
        Y_f^{N\times (T-(T_{ini}+N)+1)}\\
    \end{bmatrix}
\end{equation}
If we separate a continuous trajectory into its first $T_{ini}$ points and last $N$ points, a trajectory belongs to $\mathbb{B}_{T_{ini}+N}$ if and only if $\exists g\in \mathbb{R}^{T-(T_{ini}+N)+1}$ such that
\begin{equation}\label{g reconstruct and predict}
    \begin{bmatrix}
        U_p\\Y_p\\U_f\\Y_f
    \end{bmatrix}g=\begin{bmatrix}
        u_{ini}\\y_{ini}\\u\\y
    \end{bmatrix}
\end{equation}
We will call matrices $U_p,U_f,Y_p,Y_f$ the \emph{behavior matrices}.

\subsection{Predictive Control}
\subsubsection{Model Predictive Control}\label{subsec: mpc}
Standard MPC \citep{MPC} is an optimization problem of the following form:
{
\begin{mini}
    {\mathbf{u,y,x}}{\sum_{k=0}^{N-1}{(\mathbf{y}_k-r_k)\transp Q(\mathbf{y}_k-r_k)+\mathbf{u}_k\transp R\mathbf{u}_k}}
    {\label{MPC original}}{}
    \addConstraint{\mathbf{x}_{k+1}}{=A\mathbf{x}_k+B\mathbf{u}_k}
    \addConstraint{\mathbf{y}_k}{=C\mathbf{x}_k+D\mathbf{u}_k.}
\end{mini}
}Given reference trajectory $\{r_k\}_{k=0}^{N-1},\quad r_k\in\mathbb{R}^p$ , initial state $x_0$ and known model $(A,B,C,D)$, MPC predicts output $\{\mathbf{y}_k\}_{k=0}^{N-1},\quad \mathbf{y}_k\in\mathbb{R}^p$ for any control input $\{\mathbf{u}_k\}_{k=0}^{N-1},\quad \mathbf{u}_k\in\mathbb{R}^m$. It therefore optimizes $\mathbf{u}_k$ and $\mathbf{y}_k$ such that the error between $\mathbf{y}_k$ and $r_k$ is minimized (measured by the weight matrix $Q$), and $\mathbf{u}_k$ is minimized (measured by the weight matrix $R$). Optimization is done over horizon length $N$, but usually only $u^*_0$ is used as the next control input (i.e. the "receding horizon" method).  It is also possible to add constraints on $\mathbf{u}, \mathbf{y}$ in \cref{MPC original}, but for simplicity we will discuss the unconstrained version. We denote the cost function in \cref{MPC original} as $J_{QR}(y,u,r)$.

\subsubsection{Data-Enabled Predictive Control} \label{subsec: deepc}
DeePC works conceptually like MPC, but considering the following optimization problem:
{
\begin{mini}
    {\mathbf{g,u,y},\boldsymbol{\sigma_y}}{J_{QR}(\mathbf{y,u},r)+K_g||\mathbf{g}||_2+K_{\sigma}||\boldsymbol{\sigma_y}||_2}
    {\label{DeePC original}}{}
    \addConstraint{\begin{bmatrix}U_p\\Y_p\\U_f\\Y_f\end{bmatrix}\mathbf{g}}
                 {=\begin{bmatrix}u_{ini}\\y_{ini}+\boldsymbol{\sigma_y}\\\mathbf{u}\\\mathbf{y}\end{bmatrix}}
\end{mini}
}
where $U_p,U_f,Y_p,Y_f$ are matrices generated offline, and $u_{ini},y_{ini}$ are the most recent $T_{ini}$ measurements of the input and output. In other words, the optimization performs the following:
\begin{itemize}
    \item Solves a feasibility problem for $\mathbf{g}$, as presented in \cref{g reconstruct and predict}, yielding model-free $\mathbf{u},\mathbf{y}$ prediction.
    \item Optimizes $\mathbf{u},\mathbf{y}$ based on weights $Q,R$.
\end{itemize}
The standard form of DeePC has $K_g,K_\sigma=0$, but \cite{DeePC} suggests to add $K_g,K_\sigma,\sigma_y$ to improve robustness. The DeePC (receding horizon) algorithm is detailed in \Cref{alg: DeePC}.
\begin{algorithm}[h]
\caption{Data-Enabled Predictive Control (DeePC).}\label{alg: DeePC}
\small
\begin{algorithmic}[1]
\Require $U_p,U_f$ were generated from a persistently exciting input. $Y_p,Y_f$ are the corresponding outputs.
\Require Hankel matrix dimensions are suitable for system complexity, as defined in \cite{DeePC}
\State Update $u_{ini},y_{ini}$ based on the last $T_{ini}$ measurements \label{DeePC first step}
\State Solve \Cref{DeePC original} for optimal $g^*, y^*,u^*$
\State Apply input $u^*(0)$ to the system
\State Return to \cref{DeePC first step}
\end{algorithmic}
\end{algorithm}

\section{PROBLEM STATEMENT}\label{sec:problem}
This paper focuses on the case of autonomous systems acting as input disturbances, i.e.:
\begin{equation}\label{eq: ALTI dist SS}
    \begin{aligned}
        w_{k+1}&=A_dw_k, & &w_k\in\mathbb{R}^{n_d},\; A_d\in\mathbb{R}^{n_d\times n_d}\\
        v_k&=C_dw_k, & &v_k\in\mathbb{R}^{m},\; C_d\in\mathbb{R}^{m\times n_d}
    \end{aligned}
\end{equation}
When $v_k$ is acting as an input disturbance into \cref{eqn:LTI}, the LTI acts as the disturbed model:
\begin{equation}\label{eqn:dist LTI}
    \begin{aligned} 
        \hat{x}_{k+1}&=\begin{bmatrix}
            A&BC_d\\0&A_d
        \end{bmatrix}\hat{x}_k+\begin{bmatrix}
            B\\0
        \end{bmatrix}u_k, \; \hat{x}_k:=\begin{bmatrix}
            \breve{x}\\ w
        \end{bmatrix}\in\mathbb{R}^{n+n_d}\\
        \hat{y}_k&=\begin{bmatrix}
            C&DC_d
        \end{bmatrix}\hat{x}_k+Du_k
    \end{aligned}
\end{equation}
Letting $\tilde{x}:=\bar{x}-\breve{x}$, the difference between the output of the undisturbed and the disturbed systems can be presented as the autonomous system $\mathbb{B}_d$:
\begin{equation}\label{eqn:dist LTI diff}
\begin{aligned}
    x_{k+1}&=\begin{bmatrix}
        A&-BC_d\\
        0&A_d
    \end{bmatrix}x_k, & & & x_k&:=\begin{bmatrix}
        \tilde{x}\\ w
    \end{bmatrix}\in\mathbb{R}^{n+n_d}\\
    d_k&=\begin{bmatrix}
        C&-DC_d
    \end{bmatrix}x_k & & & d&:=\bar{y}-\hat{y},
\end{aligned}
\end{equation}
Hence predictive control can handle ALTVDs in two ways: (i) by using the model in \cref{eqn:dist LTI} to predict $\hat{y}_k$ directly, or (ii) by using \cref{eqn:dist LTI diff} to predict $d_k$ in order to correct predictions made with the model of \cref{eqn:LTI} - which this paper employs.

\subsection{Handling LTV and Non-Linear Systems using DeePC}\label{sect: Handling LTV}
Before we present our solution, we will review how dynamics that are not represented by the originally generated behavior matrices are handled by the following three main methods: Regularized DeePC \citep{DeePC}, Online DeePC \citep{ODeePC} and Online Reduced-Order DeePC \citep{MDeePC}.

\subsubsection{Regularized DeePC}\label{subsec:reg deepc}
Regularization of $g$ and use of slack variables to handle noisy data and (to an extent) nonlinearities has been shown to improve robustness to model errors in practice and theory \citep{Regularization}.

\subsubsection{Online Data-Enabled Predictive Control}\label{subsec:odeepc}
As soon as data collection of $u^d, y^d$ is done, ODeePC keeps continuously updating the Hankel matrices in parallel to controlling the plant, as described in \Cref{alg: ODeePC}.

\begin{algorithm}[h]
\caption{Online Data-Enabled Predictive Control (ODeePC).}\label{alg: ODeePC}
{\small
\begin{algorithmic}[1]
\Require input $u$ always maintains persistency of excitation
\Require \Cref{alg: DeePC} requirements
\State collect $u^d,y^d$ and generate $U,Y$ while continuously updating $u_{ini},y_{ini}$\label{alg: OdeePC collect}
\State Immediately after \Cref{alg: OdeePC collect}, solve the DeePC algorithm.\label{alg: ODeePC start} 
\State inject $u^*(0)$ and measure the resulting $y$
\State update $u^d_{new} = \text{shift}(u^d,u^*(0))$ and $y^d_{new}= \text{shift}(y^d,y)$
\State update $u_{ini},y_{ini}$ as usual
\State update the Hankel matrices using the new $u^d,y^d$
\State return to \cref{alg: ODeePC start}
\end{algorithmic}
}
\end{algorithm}
Note that \cite{ODeePC} also provides an algorithm to make updates numerically efficient.

\subsubsection{Online Reduced-Order DeePC}\label{subsec:mdeepc}
Online Reduced-Order DeePC (MDeePC) employs a mosaic Hankel matrix, where new data is appended to the Hankel matrices, as opposed to ODeePC shifting it in \citep{MDeePC}. To keep the matrix order low and avoid adding uninformative data, \cite{MDeePC} evaluates how new data changes the rank of the behavior matrices via Singular Value Decomposition. If the smallest non-zero singular value of the updated matrix falls below a user-specified threshold, the new data are discarded. Similar to \cite{ODeePC}, a method for numerically efficient updates is also provided in \cite{MDeePC}.

\subsection{Limitations \& Drawbacks}
Each of the discussed methods comes with its own drawbacks. Regularization improves DeePC's robustness, but harms performance \citep{Regularization}. ODeePC reduces the need for heavy regularization, but is computationally more expensive and requires:
\begin{enumerate}
    \item  A PE input $u_k$ , which \cite{ODeePC} enforces by injecting random perturbations into $u^*(0)$.
    \item  Offline data $u_d,y_d$ that is continuous upon the initial condition $u_{ini},y_{ini}$, since otherwise discontinuities in the updated Hankel matrices introduce false dynamics. 
\end{enumerate}

These conditions have the following implications:
\begin{enumerate}[(i)]
    \item Injected perturbations drive the output away from the reference --- causing unavoidable errors.
    \item Without PE input, the collected data no longer represent the system dynamics. If old data is then shifted out, ODeePC risks “erasing” its own model.
    \item Matrix updating must always remain active, which is why noise must be injected continuously, even if the dynamics have not changed since data collection.
\end{enumerate}

MDeePC avoids many of these issues by using mosaic Hankel matrices. However, although \cite{MDeePC} provides an algorithm to make updates efficient, it is seen that the updated matrix still grows in size even when returning to previously explored system configurations. This suggests that the optimization becomes increasingly expensive in time.

\section{Model-Free Disturbance Observer with Online Modification}
\label{sec:MFDOOM}
 Model-Free Disturbance Observer with Online Modification (MFDOOM) aims to treat ALTVDs injected into LTIs (see \cref{eqn:dist LTI}). Examples include a step disturbance ($A_d,C_d$ and $w_0$ are 1), a ramp with
 $
\begin{aligned}
    A_d=\begin{bmatrix}
        1&1\\0&1
    \end{bmatrix},C_d=\begin{bmatrix}
        1&0
    \end{bmatrix},w_0=\begin{bmatrix}
        0&1
    \end{bmatrix}\transp
\end{aligned}
$,
and a sine disturbance of amplitude $a$, frequency $\Omega$ and phase $\Phi$, with step length $T_s$, given by
\begin{align*}
    A_d&=\begin{bmatrix}
        \cos(\theta)&\sin(\theta)\\-\sin(\theta)&\cos(\theta)
    \end{bmatrix}, \; C_d=\begin{bmatrix}
        1&0
    \end{bmatrix}, \;\theta =\Omega\cdot 2\pi \cdot T_s\\
    w_0&=\begin{bmatrix}
        a\cdot \cos(\Phi)&-a\cdot \sin(\Phi)
    \end{bmatrix}.
\end{align*}
These could also be used as building blocks for time-varying disturbances, e.g., a ramp disturbance with varying slope.
\citet[Corollary~21]{Identifiability} shows that, for all $L>\mathbf{l}(\mathbb{B})$,
\[
\text{colspan}\!\left(\mathcal{H}_L(\omega)\right)=\mathbb{B}_{L}
\]
if and only if $\text{rank}\left(\mathcal{H}(\omega)\right)=mL+n$.
The following proposition states the consequence used by MFDOOM.
\begin{proposition}
\label{prop:auto_error_prediction}
Let $d^d$ be a trajectory generated by an autonomous LTI behavior $\mathbb{B}_d$ such as \cref{eqn:dist LTI diff}, and define
\[
D :=
\begin{bmatrix}
D_p\\D_f
\end{bmatrix}
:=
\mathcal{H}_{T_{ini}+N}(d^d),
\]
where $D_p$ contains the first $T_{ini}$ block rows and $D_f$ the following $N$ block rows. 
If $T_{ini}+N>\mathbf{l}(\mathbb{B}_d)$ and
\[
\text{rank}\!\left(\mathcal{H}_{T_{ini}+N}(d^d)\right)
=
\mathbf{n}(\mathbb{B}_d),
\]
then there exists $g_d$ for every trajectory 
$\text{col}(d_{ini},d_{predict})\in\mathbb{B}_{d,T_{ini}+N}$ 
such that
\[
\begin{bmatrix}
D_p\\D_f
\end{bmatrix}g_d
=
\begin{bmatrix}
d_{ini}\\d_{predict}
\end{bmatrix}.
\]
In particular, once $D_p g_d=d_{ini}$ is solved for $g_d$, $D_f g_d$ provides the corresponding future prediction-error trajectory.
\end{proposition}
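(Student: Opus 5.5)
This follows from the cited rank characterization of \citet[Corollary~21]{Identifiability}, specialized to the autonomous case $m=0$. I would prove it directly with a state-space argument, using that corollary only as a guide.

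\textbf{Step 1: factor the Hankel matrix.} Fix a minimal realization $(A_e,C_e)$ of $\mathbb{B}_d$ of order $n_e:=\mathbf{n}(\mathbb{B}_d)$; for instance, the observable part of \cref{eqn:dist LTI diff}. Let $x^d_1,\dots,x^d_{T-L+1}$ be the states generating $d^d$, where $L:=T_{ini}+N$. Every column of $\mathcal{H}_L(d^d)$ is then the free response $\mathcal{O}_L x^d_j$, so
\[
\mathcal{H}_L(d^d)=\mathcal{O}_L X, \qquad X:=\begin{bmatrix}x^d_1&\cdots&x^d_{T-L+1}\end{bmatrix}.
\]
Likewise, every trajectory of $\mathbb{B}_{d,L}$ has the form $\mathcal{O}_L x_0$ for some $x_0\in\mathbb{R}^{n_e}$, so $\mathbb{B}_{d,L}=\text{colspan}(\mathcal{O}_L)$.

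\textbf{Step 2: use the lag.} Since $L>\mathbf{l}(\mathbb{B}_d)$ and the realization is minimal, $\mathcal{O}_L$ has full column rank $n_e$. Hence $\dim\mathbb{B}_{d,L}=n_e$.

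\textbf{Step 3: identify the column spaces.} The factorization gives $\text{colspan}(\mathcal{H}_L(d^d))\subseteq\text{colspan}(\mathcal{O}_L)=\mathbb{B}_{d,L}$. The rank hypothesis says $\text{rank}(\mathcal{H}_L(d^d))=n_e=\dim\mathbb{B}_{d,L}$. A subspace contained in another of the same finite dimension equals it, so $\text{colspan}(D)=\mathbb{B}_{d,L}$.

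\textbf{Step 4: conclude.} Any $\text{col}(d_{ini},d_{predict})\in\mathbb{B}_{d,L}$ therefore equals $Dg_d$ for some $g_d$, which is the main claim. The main obstacle is the "in particular" clause: an arbitrary solution of $D_pg_d=d_{ini}$ must give the correct $D_fg_d$. Take any $g$ with $D_pg=d_{ini}$. Then $Dg\in\mathbb{B}_{d,L}$, so $Dg=\mathcal{O}_Lx'$ for some $x'$. Its first $T_{ini}$ block rows give $\mathcal{O}_{T_{ini}}x'=d_{ini}=\mathcal{O}_{T_{ini}}x_0$.

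To conclude $x'=x_0$, and hence $D_fg=d_{predict}$, we need $\mathcal{O}_{T_{ini}}$ to be injective. This requires $T_{ini}\ge\mathbf{l}(\mathbb{B}_d)$. That is the standing DeePC assumption on $T_{ini}$, but the hypothesis $T_{ini}+N>\mathbf{l}(\mathbb{B}_d)$ as stated does not imply it. I would therefore invoke it explicitly in this last step. Without it, $D_fg_d$ is only guaranteed to be a consistent prediction, namely some trajectory of $\mathbb{B}_d$ compatible with $d_{ini}$, rather than the unique one.
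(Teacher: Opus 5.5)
Your proof is correct, but it takes a different route from the paper. The paper's proof is a one-line appeal to \citet[Corollary~21]{Identifiability} with $m=0$. The rank condition gives $\text{colspan}(D)=\mathbb{B}_{d,T_{ini}+N}$, and the conclusion is then read off ``after partitioning.'' You instead prove the autonomous special case directly, from three ingredients:
\begin{itemize}
\item the factorization $\mathcal{H}_L(d^d)=\mathcal{O}_L X$;
\item the identity $\mathbb{B}_{d,L}=\text{colspan}(\mathcal{O}_L)$;
\item a dimension count.
\end{itemize}
The citation is shorter and ties the statement to the general input/output identifiability theory the paper relies on. Your version is elementary and self-contained. It also shows something the citation hides: since $\text{rank}(\mathcal{O}_L X)\le\text{rank}(\mathcal{O}_L)$, the rank condition by itself forces $\text{rank}(\mathcal{O}_L)=\mathbf{n}(\mathbb{B}_d)$, i.e.\ $L\ge\mathbf{l}(\mathbb{B}_d)$. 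So the separate hypothesis on $T_{ini}+N$ is not needed in the autonomous case.

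More importantly, you actually examine the ``in particular'' clause, which the paper's proof does not. Knowing that some $g_d$ satisfies $Dg_d=\text{col}(d_{ini},d_{predict})$ does not imply that every solution of $D_pg_d=d_{ini}$ reproduces $d_{predict}$. You are right that this needs $\mathcal{O}_{T_{ini}}$ to be injective, i.e.\ $T_{ini}\ge\mathbf{l}(\mathbb{B}_d)$. This is not implied by $T_{ini}+N>\mathbf{l}(\mathbb{B}_d)$.

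The extra assumption is genuinely necessary, not just convenient. Suppose $T_{ini}<\mathbf{l}(\mathbb{B}_d)$ and pick $0\neq z\in\ker\mathcal{O}_{T_{ini}}$. Because $\mathcal{O}_L$ is injective, $\mathcal{O}_L x_0$ and $\mathcal{O}_L(x_0+z)$ are two distinct elements of $\text{colspan}(D)$ with the same past $d_{ini}$. Their futures must therefore differ, so $D_fg_d$ is not determined by $D_pg_d=d_{ini}$. On this point your write-up is strictly more careful than the paper's.
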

\begin{pf}
By \citet[Corollary~21]{Identifiability}, the stated rank condition with $m=0$ implies
$\text{colspan}(D)=\mathbb{B}_{d,T_{ini}+N}$.
Hence any admissible trajectory $\text{col}(d_{ini},d_{predict})$
of the autonomous prediction-error behavior is a linear combination of the columns of $D$,
which gives the stated result after partitioning $D=\text{col}(D_p,D_f)$.
\end{pf}

\subsection{MFDOOM Algorithm}
MFDOOM considers the following optimization problem:

\begin{mini}
    {\mathbf{g,u,y},\boldsymbol{\sigma_y},\mathbf{d,g_d}}
    {J_{QR}(\mathbf{y}-\Gamma \mathbf{d},\mathbf{u,r})
    +K_g||\mathbf{g}||_2
    +k_\sigma||\boldsymbol{\sigma_y}||_2}
    {\label{MFDOOM}}{}
    \breakObjective{+k_{g_d}||\mathbf{g_d}||_2}
    \addConstraint{\begin{bmatrix}U_p\\Y_p\\U_f\\Y_f\end{bmatrix}\mathbf{g}}
    {=\begin{bmatrix}u_{ini}\\y_{ini}+\boldsymbol{\sigma_y}\\\mathbf{u}\\\mathbf{y}\end{bmatrix}}
    \addConstraint{\begin{bmatrix}D_p\\D_f\end{bmatrix}\mathbf{g_d}}
    {=\begin{bmatrix}d_{ini}\\\mathbf{d}\end{bmatrix}}
\end{mini}

Variables $\mathbf{u,y,g},\boldsymbol{\sigma_y}$, along with $U_p,Y_p,U_f,Y_f,u_{ini}$ and $y_{ini}$, follow the standard DeePC formulation. 
MFDOOM augments DeePC with prediction-error behavior matrices $D_p,D_f$, continuously generated from collected prediction-error samples $d^d$. 
Recent prediction-error history $d_{ini}\in\mathbb{R}^{p\cdot T_{ini_d}}$, along with $\mathbf{g_d}$ enables prediction of future error $\mathbf{d}$, which corrects the predicted output in the cost through $\mathbf{y}-\Gamma\mathbf{d}$. 
The online implementation is summarized in \Cref{alg: MFDOOM}: data collection starts only after the prediction error exceeds user-defined numerical noise threshold $\epsilon$, correction is enabled once $D_p,D_f$ are generated, and $\Gamma$ may be adapted by \Cref{alg: Adaptive MFDOOM} to suppress unreliable corrections (i.e. for large error-estimation mismatch, $\Gamma d\approx0$, reverting \cref{MFDOOM} to regularized DeePC). Tuning $K_{g_d}$ and $K_\Gamma$ therefore balances disturbance rejection, transient robustness, and nominal performance.

\begin{algorithm}[h]
\caption{Model-Free Disturbance Observer with Online Modification (MFDOOM).}\label{alg: MFDOOM}
{\small
\begin{algorithmic}[1]
\Require $T_d \geq T_{ini_d}+N+\mathbf{n}(\mathbb{B}_d)-1$
\Require $\epsilon$ bigger than value of expected numerical errors
\Require All DeePC conditions for $U_p,Y_p,U_f,Y_f$ hold
\State Set $\Gamma$, $D_p$ and $D_f$ to zero
\State Solve \cref{MFDOOM} to get $u^*,y^*$ (and $d^*$) \label{alg: doom dpc start}
\State Apply $u^*(0)$ to the system
\State Measure the actual output $y_a$ 
\State Calculate the prediction error $d_a := y^*(0)-y_a$ \label{alg: doom dpc end}
\If {$|d_a|<\epsilon$}
    \State Return to \cref{alg: doom dpc start}
\Else
    \State Move to \cref{alg: doom while}
\EndIf
\While {size($d^d$)$<T_d$}\label{alg: doom while}
    \State Append $d_a$ to $d^d$
    \State Repeat \cref{alg: doom dpc start}-\cref{alg: doom dpc end}
\EndWhile 
\State Update $d^d_{new}=\text{shift}(d^d,d_a)$ \label{alg: doom shift}
\State Generate $D_p$ and $D_f$ using $\mathcal{H}(d^d)=\text{col}(D_p,D_f)$ \label{alg: doom start}
\State Set $\Gamma=1$
\State Repeat \cref{alg: doom dpc start}-\cref{alg: doom dpc end}
\State Return to \cref{alg: doom shift}
\end{algorithmic}
}
\end{algorithm}
 
\begin{algorithm}[h]
\caption{Adaptive Gain MFDOOM.}\label{alg: Adaptive MFDOOM}
{\small
\begin{algorithmic}[1]
\Require $K_\Gamma$ user determined adaptive gain
\State Perform \Cref{alg: MFDOOM} up to \cref{alg: doom start}
\State solve \cref{MFDOOM} to get $u^*,y^*,d^*$ \label{alg: adaptive doom start}
\State $\text{use $u^*(0)$ as the input to the system}$
\State Measure the actual output $y_a$ 
\State Calculate the actual prediction error $d_a := y^*(0)-y_a$
\State Calculate prediction-error estimation error $e_d=|d_a-d^*(0)|$
\State Set $\Gamma = \frac{1}{(1+K_\Gamma\cdot e_d^2)}$
\State shift out oldest $d^d$ values, shift in $d_a$ 
\State Generate $D_p,D_f$ using $d^d$
\State Return to \cref{alg: adaptive doom start}
\end{algorithmic}
}
\end{algorithm}

\subsection{Convergence}
\cite{DeePC} provides proof of the convergence of standard DeePC algorithm. As MFDOOM simply adds a correction to output prediction $\mathbf{y}$, convergence of MFDOOM is reliant on two aspects: is the $\mathbf{d}$ prediction accurate, and can $D_p,D_f$ be continuously updated. Prediction accuracy is given by \Cref{prop:auto_error_prediction}. Matrix update between iterations is not an issue as long as the conditions of \Cref{prop:auto_error_prediction} stay valid (equivalent to ODeePC). Since $d$ is not the actual difference between two systems, but rather the difference between the measured system output and the DeePC predictor, it may be noisy. Analogously to DeePC regularization, penalizing $\mathbf{g_d}$ is expected to improve robustness to noisy prediction-error data; this is verified numerically in \Cref{sect: simulation}.

\subsection{Benefits}

MFDOOM has several practical advantages over existing online DeePC variants. Since the nominal DeePC matrices $U,Y$ remain fixed, closed-form DeePC terms depending only on $U,Y$ can be precomputed \citep{Regularization}, while online updates are required only for the smaller prediction-error matrix $D:=\text{col}(D_p,D_f)$, which does not expand in size. Unlike ODeePC, MFDOOM does not require persistently exciting probing inputs for the nominal behavior; the required richness is shifted to $D$, i.e., to the observed prediction-error behavior. If the data are low-rank or unreliable, the learned correction is accordingly low-order, irrelevant, or temporarily unreliable, but inaccuracies are attenuated through $\Gamma$ -- greatly improving transient response. This separation between the fixed baseline DeePC predictor and the adaptive correction mechanism allows prediction-error data to be reset or model-correction to be attenuated without corrupting the nominal controller, unlike methods that continuously modify or expand the nominal behavior matrices.

\section{Simulations}\label{sect: simulation}
We control the velocity of a DC motor under ZOH discretization with sample period of $T_s=0.05[s]$ and sinusoidal input disturbance, described by the following state space matrices (shown to 6th digit after decimal point accuracy):
\begin{equation}\label{eqn:sim}
\begin{aligned}
    A &= 0.010615 & & & B &= 0.174131 \\
    C &= 26.363636 & &  & D &= 0 \\
    A_d &=\begin{bmatrix}
        \phantom{-}0.951056 &	0.309016\\
        -0.309016 &	0.951056
    \end{bmatrix} & & &
     C_d &= \begin{bmatrix}
        1 & 0
    \end{bmatrix} 
\end{aligned}
\end{equation}
We define the tracking error $e_k\in\mathbb{R}$ as the difference between the reference and the feedback at step $k$. For tracking error until step $T$, we measure its size via the root mean square error (RMSE) as defined by: \begin{equation*}
    \text{RMSE}:=\sqrt{\frac{1}{T}\sum_{k=1}^T{e_k^2}}.
\end{equation*}

We simulate four controllers --- MPC without disturbance as a performance benchmark, and ODeePC, MDeePC and MFDOOM under disturbance. Tuning parameters are detailed in \Cref{tab:tuning params}, and regularization terms are different for each method, as ODeePC and MDeePC lose stability for lower $K_g$ values. The reference trajectory is composed of sine, ramp and step segments (see~\Cref{fig:output_response}). The test is repeated with added sensor noise of amplitude $\pm 0.005 \;[rad/s]$ and adjusted MFDOOM gains $K_g=10,K_{g_d}=10$ (see~\Cref{fig:noisy_error_response}). All tests use the same noise vector, whose elements were generated with MATLAB's rand() function, i.e. $0.01\cdot\left(rand()-0.5\right)$.

\begin{table}[h]
\begin{center}
\caption{Simulation Tuning Parameters.}\label{tab:tuning params}
    \begin{tabular}{c|c|c|c|c}
    \hline
            & MPC    & ODeePC & MDeePC & MFDOOM\\
         \hline
        $T_{ini}$    & 20     & 20     & 20   & 20  \\
         \hline
        $N$ & 5      & 5      & 5      & 5  \\
        \hline
        $Q$ & 1000   & 1000   & 1000   & 1000  \\
         \hline
        $R$ & 0.1    & 0.1    & 0.1    & 0.1\\
         \hline
        $K_g$ & -    & 1000   & 100    & 1\\
         \hline
        $K_\sigma$&- & $10^5$ & $10^5$ & $10^5$\\
         \hline
        $T_{ini_d}$    & -     & -     & -   & 2  \\
         \hline
        $K_{g_d}$& - & -      & -      & 1\\
         \hline
        $K_\Gamma$& - & -      & -    & 1000\\

    \hline
    \end{tabular}
\end{center}
\end{table} 

Like \cite{MDeePC}, our example corresponds to an LTV system that is piecewise LTI, with the autonomous disturbance active in the shaded regions of the response plots. The output response is shown in \Cref{fig:output_response} and the error plot is shown in \Cref{fig:error_response}. Compared to ODeePC and MDeePC, MFDOOM exhibits faster convergence, close to zero tracking error after prediction convergence, smaller error peaks and lower RMSE. Further, \Cref{fig:disturbance_response} visually shows that MFDOOM $d^*$ converges to the actual prediction-error trajectory.

\begin{figure}[ht]
\centering

\begin{subfigure}{\linewidth}
    \centering
%
    \caption{RMSE -- ODeePC: 0.34, MDeePC: 0.19, MFDOOM: 0.06.}
    \label{fig:noiseless_error_response}
\end{subfigure}

\vspace{0.3cm}

\begin{subfigure}{\linewidth}
    \centering
   %
%
    \caption{RMSE -- ODeePC: 0.55, MDeePC: 0.22, MFDOOM: 0.06.}
    \label{fig:noisy_error_response}
\end{subfigure}

\caption{Velocity tracking error comparison of ODeePC, MDeePC and MFDOOM when controlling \cref{eqn:sim} under harmonic disturbance (gray region), without feedback sensor noise (\Cref{fig:noiseless_error_response}) and with it (\Cref{fig:noisy_error_response}).}
\label{fig:error_response}

\end{figure}

\begin{figure}[ht]\centering
\definecolor{mycolor1}{rgb}{0.85000,0.32500,0.09800}%
\definecolor{mycolor2}{rgb}{0,0.5,0.0}%
%
%
\caption{DeePC output prediction-error due to a piecewise-LTI harmonic disturbance acting on \cref{eqn:sim} compared to the 0-step and $(N-1)$-step error-estimations found by MFDOOM.}
\label{fig:disturbance_response}
\end{figure}

\subsection{Time-Varying-Amplitude Linear Disturbance}
Next, we change system \cref{eqn:sim} by replacing $C_d$ with $\tilde{C}_d(t)=\frac{t}{100}C_d$. Again, we observe that MFDOOM both accurately predicts future prediction errors as well as correctly compensates for them (see~\Cref{fig:LTV_error_response,fig:LTV_disturbance_response}).
\begin{figure}[ht]\centering
\begin{tikzpicture}
	
	\begin{axis}[%
		width=0.85\columnwidth,   %
		height=0.25\columnwidth,  %
		scale only axis,
		unbounded coords=jump,
		xmin=0,
		xmax=15,
		ymin=-2,
		ymax=2,
		xlabel={Time [s]},
		ylabel={Velocity Error $[rad/s]$},
		axis background/.style={fill=white},
		axis x line*=bottom,
		axis y line*=left,
		xmajorgrids,
		ymajorgrids,
		legend style={
			at={(0.5,1.05)},
			anchor=south,
			legend columns=-1,
			legend cell align=left,
			align=left,
			draw=white!15!black,
		}
		]
		\addplot [color=orange, dotted, very thick]
		table[row sep=crcr]{%
			0	0.000174582125810119\\
			0.0500000000000007	0.0275185545792223\\
			0.0999999999999996	-0.0464911137051072\\
			0.15	-0.0275123358345919\\
			0.25	-0.0454176909385478\\
			0.300000000000001	-0.0150699690361549\\
			0.35	-0.0187741408501587\\
			0.4	-0.0104906591772007\\
			0.449999999999999	-0.0216361176976818\\
			0.5	-0.00761483272540708\\
			0.550000000000001	0.0362166654105671\\
			0.6	0.0364960480206999\\
			0.65	-0.0414219310581156\\
			0.699999999999999	0.00358554486857976\\
			0.75	-0.00739137045103178\\
			0.800000000000001	0.000511843009626034\\
			0.85	-0.0197229915709407\\
			0.9	0.0352463436129646\\
			1	-0.00127924925786083\\
			1.05	-0.0277381781253023\\
			1.1	-0.0282617723227609\\
			1.15	-0.0314426139114907\\
			1.2	-0.0222653007537712\\
			1.25	-0.0314718412165895\\
			1.3	0.0213607332918784\\
			1.35	-0.0402885522599501\\
			1.4	-0.18065833526744\\
			1.45	-0.391184747636792\\
			1.5	-0.553353573153355\\
			1.55	-0.583732555819651\\
			1.6	-0.69043423509329\\
			1.65	-0.58008903135396\\
			1.7	-0.517672556593443\\
			1.75	0.686113461276879\\
			1.8	0.531488440221132\\
			1.85	0.251114250848048\\
			1.9	0.0852105099361307\\
			1.95	0.0292264870908685\\
			2	-0.0186024207102271\\
			2.05	0.00845082068703107\\
			2.1	-0.0300940700884311\\
			2.15	0.0169651745628627\\
			2.2	0.000611458877401105\\
			2.25	0.012059931363094\\
			2.3	-0.0198246228232168\\
			2.35	0.0203509309494248\\
			2.4	-0.0166070940328229\\
			2.45	0.000356958347422776\\
			2.5	0.00252268246525134\\
			2.55	0.00631293168476788\\
			2.6	-0.00657435700601816\\
			2.65	0.00560147045895931\\
			2.7	-0.0516357680826385\\
			2.75	-0.0674644246803755\\
			2.8	-0.0749563335725405\\
			2.95	0.0137075982045847\\
			3	0.0162491618997898\\
			3.05	0.00900930323436988\\
			3.1	0.0147286741864701\\
			3.15	-0.0341836314692561\\
			3.2	0.0137677674807328\\
			3.25	0.0123813552802243\\
			3.3	0.0404293245240854\\
			3.35	0.075585404005853\\
			3.4	0.0819052050967439\\
			3.45	0.0285378909941265\\
			3.5	0.067081028108408\\
			3.55	0.0339236861123577\\
			3.6	0.0584522088833701\\
			3.65	0.025599252798207\\
			3.7	-0.0262311002062425\\
			3.75	0.0763714091368399\\
			3.8	0.0145510194133944\\
			3.85	0.0122024992316874\\
			3.9	-0.00297729800943358\\
			3.95	0.0272322953382265\\
			4	0.0800416579120871\\
			4.05	0.0838818240649211\\
			4.1	0.166441595789523\\
			4.15	0.190487136475037\\
			4.2	0.146353058269959\\
			4.25	0.200730829286417\\
			4.3	0.124819747949628\\
			4.35	0.115909464256969\\
			4.4	0.0987494729999323\\
			4.45	0.0744309760906958\\
			4.5	0.0905416144130715\\
			4.55	0.0303006277657598\\
			4.6	0.0281538870238656\\
			4.65	-0.008813781776702\\
			4.7	-0.043374905610019\\
			4.75	-0.106730734343129\\
			4.8	-0.0665321315351211\\
			4.85	-0.0106297714645365\\
			4.9	0.0391201981665983\\
			4.95	0.172359393258212\\
			5	0.243456232750418\\
			5.05	0.382190904127727\\
			5.1	0.516391399642451\\
			5.15	0.705739205253623\\
			5.2	0.841463659088026\\
			5.3	1.14634459560545\\
			5.35	1.30793785124174\\
			5.4	1.39493818624685\\
			5.45	1.5231588524037\\
			5.5	1.52805711749025\\
			5.55	1.47607187132311\\
			5.6	0.864648761760467\\
			5.65	0.702868775597132\\
			5.7	0.450405240447957\\
			5.75	0.273010617108682\\
			5.8	0.283861271423307\\
			5.85	-0.0586712948584847\\
			5.9	-0.011095691736891\\
			5.95	-0.0924515789165827\\
			6	-0.12484634700175\\
			6.05	0.0964675012523433\\
			6.1	-0.028424261047828\\
			6.15	0.0266636754635527\\
			6.2	0.0064901820841996\\
			6.25	-0.0436408856856616\\
			6.3	0.0867931808728777\\
			6.35	0.0120177082675905\\
			6.4	0.0702238329577654\\
			6.45	0.0692415001123692\\
			6.5	-0.0379047387377529\\
			6.55	0.0355493199151002\\
			6.6	0.0313570240982681\\
			6.65	0.0046378723702265\\
			6.7	0.167065187273108\\
			6.75	0.466918114855007\\
			6.8	-0.551740043509646\\
			6.85	-0.239869286021095\\
			6.9	-0.0535715077819745\\
			6.95	0.0395890978078803\\
			7	-0.0649157568497536\\
			7.05	0.0355250552651096\\
			7.1	-0.0566777430670573\\
			7.15	-0.00540229206196763\\
			7.2	0.124219427313275\\
			7.25	-0.0518311498605044\\
			7.3	0.0630276497792828\\
			7.35	0.0673528821815914\\
			7.4	0.0389772475283792\\
			7.45	0.0196733122657839\\
			7.5	-0.0281044339915208\\
			7.55	0.0625787962772595\\
			7.6	0.00501174526776182\\
			7.65	-0.0445836428447208\\
			7.7	0.0903914739906924\\
			7.75	0.128187130999573\\
			7.8	-0.223892174053587\\
			7.85	-0.115039503210589\\
			7.9	0.0075992926882229\\
			7.95	0.0339686006429076\\
			8	-0.0553159263126553\\
			8.05	0.121829703750375\\
			8.1	0.00525772594357043\\
			8.15	-0.0125749994084323\\
			8.2	0.0714172091641299\\
			8.25	-0.101899146509645\\
			8.3	0.00184814154270718\\
			8.35	0.0175307514751442\\
			8.4	-0.00842817088457615\\
			8.45	-0.0169954619275323\\
			8.5	-0.109457979599684\\
			8.55	-0.0358786678294578\\
			8.6	-0.0212174709099102\\
			8.65	-0.0265165972942469\\
			8.7	0.0565054693447156\\
			8.75	-0.00544077848176627\\
			8.8	0.0706887177403406\\
			8.85	0.0495378096222421\\
			8.9	0.0903777372900443\\
			8.95	0.0981372333224666\\
			9	-0.0246318553432356\\
			9.05	0.0712741489048074\\
			9.1	-0.0130904820176418\\
			9.15	-0.0352281203120697\\
			9.2	-0.0210184622441503\\
			9.25	-0.0340999779145985\\
			9.3	0.0627474277017548\\
			9.35	0.0341536218944505\\
			9.4	0.0270623940271069\\
			9.45	0.0862535380909808\\
			9.5	-0.0027401384702852\\
			9.55	-0.00604619436355414\\
			9.6	0.00306077240096592\\
			9.65	-0.0317619881673163\\
			9.7	-0.000404799420040902\\
			9.75	-0.0950523246735866\\
			9.8	-0.0508505148101719\\
			9.85	-0.030365038672489\\
			9.9	-0.031219156444017\\
			9.95	0.0203172911116294\\
			10	-0.0530549341101167\\
			10.05	0.0223261005394555\\
			10.1	0.0256578834056338\\
			10.15	-0.0289032682776131\\
			10.2	0.0309644281435997\\
			10.25	-0.0126891275474428\\
			10.3	0.0518843621990026\\
			10.35	0.00581882519684029\\
			10.4	-0.000989408223247779\\
			10.45	0.0699934035319885\\
			10.5	0.0345568813835548\\
			10.55	0.0642492148061553\\
			10.6	0.105322713716854\\
			10.65	0.0998725694870632\\
			10.7	0.109994528632637\\
			10.75	0.0721488538867767\\
			10.8	0.0840307198315386\\
			10.85	0.0829687520341125\\
			10.9	-0.00786420572897839\\
			10.95	0.0178127093692275\\
			11	-0.0255793491170468\\
			11.05	-0.00309662060276672\\
			11.1	-0.0355409688318034\\
			11.15	-0.0484703842490521\\
			11.2	0.0170751710899992\\
			11.25	-0.0630916871205045\\
			11.3	-0.0291181386597561\\
			11.35	-0.0175229123803273\\
			11.4	-0.0300293802572256\\
			11.45	0.0665861720033014\\
			11.5	0.00161327411982271\\
			11.55	0.0659693479821062\\
			11.6	0.110041907322627\\
			11.65	0.0988612082882554\\
			11.7	0.106577025727651\\
			11.75	0.0150706782282732\\
			11.8	0.463509252729303\\
			11.85	0.0112753883252843\\
			11.9	-0.543805984434879\\
			12	-1.30516034036796\\
			12.05	-1.53475734391911\\
			12.1	-1.70369398448737\\
			12.15	-2.32219744165318\\
			nan	nan\\
			12.461295609769	-2.4\\
			12.5098943488114	2.4\\
			nan	nan\\
			12.8753323778572	2.4\\
			12.9	0.837345435410104\\
			12.95	-1.57965576678189\\
			12.9983632396594	-2.4\\
			nan	nan\\
			13.0016281198775	-2.4\\
			13.05	-1.57515484581877\\
			13.1	0.553249119402436\\
			13.1472108258622	2.4\\
			nan	nan\\
			13.3502743908996	2.4\\
			13.4	0.0613935577193665\\
			13.45	-1.29486205750595\\
			13.5	-1.75546921843104\\
			13.5105553361627	-2.4\\
			nan	nan\\
			13.7991124883228	-2.4\\
			13.7991124883228	-2.4\\
			nan	nan\\
			13.6261030599573	-2.4\\
			13.65	-0.637562494113569\\
			13.7	1.98988732669885\\
			13.7058378107475	2.4\\
			nan	nan\\
			14.0633466549434	2.4\\
			14.093347610738	-2.4\\
			14.0812958494798	-2.4\\
			14.1935905685637	-2.4\\
			14.2	-1.82514838200069\\
		};
		\addlegendentry{ODeePC}
		
		\addplot [color=cyan, dashed, very thick]
		table[row sep=crcr]{%
			0	0.0303668471226111\\
			0.0500000000000007	0.0211068597368573\\
			0.0999999999999996	-0.0170193206553844\\
			0.15	-0.0305824659801175\\
			0.199999999999999	-0.00873402430223535\\
			0.25	0.00239852302342314\\
			0.300000000000001	-0.078863112995764\\
			0.35	-0.0368685363945147\\
			0.4	0.0170464195630977\\
			0.449999999999999	0.0174847867179189\\
			0.550000000000001	0.116198606694358\\
			0.6	0.0961986741290044\\
			0.65	0.00498202682378235\\
			0.699999999999999	-0.0202348863854311\\
			0.75	-0.0173251918448987\\
			0.800000000000001	0.0089511236039197\\
			0.85	0.0530111262816373\\
			0.9	0.059237927788832\\
			0.949999999999999	0.0519932999645718\\
			1	0.0514003630863975\\
			1.1	0.0387112295893228\\
			1.2	0.00839277558150364\\
			1.25	-0.00442772902686173\\
			1.3	-0.0155262434552395\\
			1.35	-0.0286192724903565\\
			1.4	-0.231729900937344\\
			1.45	-0.421314625145959\\
			1.5	-0.577222889691189\\
			1.55	-0.677929774235432\\
			1.6	-0.71398892751775\\
			1.65	-0.682725751960595\\
			1.7	-0.589394884781996\\
			1.75	-0.36852613662511\\
			1.8	0.541793232088338\\
			1.85	0.562143705567671\\
			1.9	0.447439639059244\\
			1.95	0.269527012370833\\
			2	-0.0961884190864275\\
			2.05	-0.3172016640734\\
			2.1	-0.406323498758089\\
			2.15	-0.00580481359767404\\
			2.2	0.0993960061825163\\
			2.3	0.166172365072665\\
			2.35	0.205138457683637\\
			2.4	0.288793818184997\\
			2.45	-0.0112538103047015\\
			2.5	-0.0913377164763052\\
			2.55	-0.133770664345363\\
			2.6	-0.172426751992086\\
			2.65	-0.232008199426181\\
			2.7	-0.312521889175855\\
			2.75	-0.383167890625936\\
			2.8	-0.432342420510793\\
			2.85	-0.437903571993315\\
			2.9	-0.394584430782972\\
			2.95	-0.0808839026003625\\
			3	0.490986879368496\\
			3.05	0.622073597030406\\
			3.1	0.680030668708291\\
			3.15	0.69745419289247\\
			3.2	0.687809834261907\\
			3.25	0.659019050518443\\
			3.3	0.60253017670771\\
			3.35	0.510908689999008\\
			3.4	0.33079401909538\\
			3.45	-0.167703086583847\\
			3.5	-0.482811221732542\\
			3.55	-0.57777781066048\\
			3.6	-0.631373355242577\\
			3.65	-0.656208497547693\\
			3.7	-0.652490719020765\\
			3.75	-0.627237344069252\\
			3.8	-0.5657294323317\\
			3.85	-0.46938415908742\\
			3.9	-0.305503908870058\\
			3.95	0.0340912162834233\\
			4	0.335447140875546\\
			4.05	0.428377037890671\\
			4.1	0.47208299140776\\
			4.15	0.492019541551475\\
			4.2	0.49470033422131\\
			4.25	0.483012897992827\\
			4.3	0.45099396721964\\
			4.35	0.403120161687346\\
			4.4	0.335126932253283\\
			4.45	0.194469487175775\\
			4.5	-0.0979897883986656\\
			4.55	-0.330867902851805\\
			4.6	-0.389384538466762\\
			4.65	-0.40694107676994\\
			4.7	-0.406305918243987\\
			4.75	-0.391197899322702\\
			4.8	-0.358203966499094\\
			4.85	-0.299300762868299\\
			4.9	-0.190020512855403\\
			4.95	0.0155102029901943\\
			5	0.265149233751602\\
			5.05	0.402940477678392\\
			5.1	0.455393900108499\\
			5.15	0.479266706019922\\
			5.2	0.483676438787887\\
			5.25	0.470710513325415\\
			5.3	0.437432134926901\\
			5.35	0.370746590877458\\
			5.4	0.22989486277462\\
			5.45	-0.0272449672628206\\
			5.5	-0.250930789684366\\
			5.55	-0.314505832091267\\
			5.6	-0.339391038316458\\
			5.65	-0.349789824811012\\
			5.7	-0.344145168124099\\
			5.75	-0.321235060024003\\
			5.8	-0.271069810966939\\
			5.85	-0.179520713206349\\
			5.9	-0.029259167098882\\
			5.95	0.168571766523574\\
			6	0.335597812599671\\
			6.05	0.423008137127265\\
			6.1	0.460533287146349\\
			6.15	0.473549881029612\\
			6.2	0.469372619417779\\
			6.25	0.450726994624221\\
			6.3	0.415457886253302\\
			6.35	0.358775202555043\\
			6.4	0.25248625184128\\
			6.45	0.0428024235373616\\
			6.5	-0.201487408923537\\
			6.55	-0.282401828543206\\
			6.6	-0.313096056226055\\
			6.65	-0.296276710796381\\
			6.7	-0.308585136141472\\
			6.75	-0.322635994230339\\
			6.8	-0.408476842018912\\
			6.85	-0.396127758125953\\
			6.9	-0.358007009711317\\
			6.95	-0.236883807193172\\
			7	-0.0378851511199354\\
			7.05	0.203539250876268\\
			7.1	0.357920687840249\\
			7.15	0.392050969303277\\
			7.2	0.405308660817497\\
			7.25	0.386743552490771\\
			7.3	0.360470475936088\\
			7.35	0.267619188280035\\
			7.4	0.162283203273121\\
			7.5	-0.202984756545494\\
			7.55	-0.274253928545239\\
			7.6	-0.254161824748881\\
			7.65	-0.19811767976311\\
			7.7	-0.196597607983703\\
			7.75	-0.237357112806116\\
			7.8	-0.339755376904217\\
			7.85	-0.322894298044739\\
			7.9	-0.228702421653136\\
			7.95	-0.0262802907967732\\
			8	0.190567612768438\\
			8.05	0.311771432211804\\
			8.1	0.363995780245837\\
			8.15	0.367419934269352\\
			8.2	0.368643332461607\\
			8.25	0.347673912788398\\
			8.3	0.31910279445556\\
			8.35	0.235954594354613\\
			8.4	0.124409959045101\\
			8.45	-0.0713505347662888\\
			8.5	-0.253409127875345\\
			8.55	-0.317494335438182\\
			8.6	-0.324013507774097\\
			8.75	-0.293525637383507\\
			8.8	-0.269369225457837\\
			8.85	-0.197811417576968\\
			8.9	-0.0446988276910716\\
			8.95	0.150383805500866\\
			9	0.280100092528931\\
			9.05	0.323406065926354\\
			9.1	0.343711170802482\\
			9.15	0.350462236660467\\
			9.2	0.342995853776793\\
			9.25	0.321706348523563\\
			9.3	0.280313799656341\\
			9.35	0.208041487714441\\
			9.4	0.0778834721936352\\
			9.45	-0.112283826671781\\
			9.5	-0.259312985343286\\
			9.55	-0.310742962433\\
			9.6	-0.332720360701392\\
			9.65	-0.339905007010136\\
			9.7	-0.333262265258695\\
			9.75	-0.31281846113928\\
			9.8	-0.27381054399817\\
			9.85	-0.206604282979464\\
			9.9	-0.085169955612928\\
			9.95	0.097936378460231\\
			10	0.250731112793458\\
			10.05	0.30393460589865\\
			10.1	0.325029870497071\\
			10.15	0.331943391453189\\
			10.2	0.325732929712604\\
			10.25	0.306610246668297\\
			10.3	0.268236071094529\\
			10.35	0.200288471434407\\
			10.4	0.0787675913726886\\
			10.45	-0.100036949017129\\
			10.5	-0.245596017725244\\
			10.55	-0.297589997072544\\
			10.6	-0.319110952142294\\
			10.65	-0.326505828909452\\
			10.7	-0.32064096335394\\
			10.75	-0.30188590083482\\
			10.8	-0.26374470528221\\
			10.85	-0.196057489281714\\
			10.9	-0.076078156774587\\
			10.95	0.0988465128053519\\
			11	0.24070750313887\\
			11.05	0.291884875439225\\
			11.1	0.313434582105836\\
			11.15	0.320806186616768\\
			11.2	0.31504770891029\\
			11.25	0.29649540079453\\
			11.3	0.258944071506704\\
			11.35	0.192598710604992\\
			11.4	0.0755800055435145\\
			11.45	-0.0952377201000338\\
			11.5	-0.235562242616592\\
			11.55	-0.286547549796646\\
			11.6	-0.30805121768681\\
			11.65	-0.315452916409569\\
			11.7	-0.309805258623165\\
			11.75	-0.291513120852946\\
			11.8	0.230700536331057\\
			11.85	-0.183995874956581\\
			11.9	-0.725920422973225\\
			11.95	-1.10246291395059\\
			12	-1.1070540530089\\
			12.05	-0.792773957060909\\
			12.1	-0.331510403195168\\
			12.15	0.160743722434578\\
			12.2	0.98245545167479\\
			12.25	1.44759185822748\\
			12.3	1.44123346722991\\
			12.35	1.5737458312354\\
			12.4	1.08197029789563\\
			12.45	0.0824110257531654\\
			12.5	-0.707598338683546\\
			12.55	-1.30082835712429\\
			12.6	-1.3891060086704\\
			12.65	-1.43111866100256\\
			12.7	-1.32327385702652\\
			12.75	-0.649747464267021\\
			12.8	-0.102294081045681\\
			12.85	-0.170699371948233\\
			12.9	-0.122536457475544\\
			12.95	-0.0653755936584073\\
			13	0.0613799035690263\\
			13.05	0.101679110666373\\
			13.1	0.120595758046264\\
			13.2	0.116812972270726\\
			13.25	0.065047037431647\\
			13.3	-0.0707076087121372\\
			13.35	-0.110420234517418\\
			13.4	-0.107577221432576\\
			13.45	-0.0906049611887045\\
			13.5	-0.0699245934848474\\
			13.55	-0.0572800477458131\\
			13.6	0.0541390870384255\\
			13.65	0.0877936594385442\\
			13.7	0.0522417399267265\\
			13.75	0.00200255863832588\\
			13.8	0.0193269817011359\\
			13.85	0.0621120299943279\\
			13.9	0.0782281265274172\\
			13.95	0.0802690800823012\\
			14	0.0634204077811003\\
			14.05	0.0495739026032371\\
			14.15	-0.0374212109027479\\
			14.2	-0.0488494370545904\\
		};
		\addlegendentry{MDeePC}
		
		\addplot [color=magenta, solid, very thick]
		table[row sep=crcr]{%
			0	0\\
			1.35	-4.02877759597686e-06\\
			1.4	-0.19009992864348\\
			1.45	-0.366304178838522\\
			1.5	-0.508997769715112\\
			1.55	-0.603560183894462\\
			1.6	-0.639936984843468\\
			1.65	-0.613682561509618\\
			1.7	-0.526480480061938\\
			1.75	-0.386066376705015\\
			1.8	-0.201605926984847\\
			1.85	-0.0021828121037526\\
			1.9	-0.000520093380568909\\
			3.8	-0.000819841515591335\\
			4.05	-0.000870328701703116\\
			4.55	0.000945946957591204\\
			5.05	-0.000992834125229436\\
			5.55	0.00104526254626691\\
			6	-0.00142317808670356\\
			6.55	0.00108778693274481\\
			7	-0.00150260375597\\
			7.5	0.00152123229319656\\
			8	-0.00152969628864241\\
			8.45	0.00200658861234615\\
			8.95	-0.00204292597492461\\
			9.45	0.00206574265728854\\
			9.95	-0.00208514383947289\\
			10.4	0.00251264762441394\\
			11	-0.00141719417540465\\
			11.4	0.00259812345193211\\
			11.75	-0.00120938385964564\\
			11.8	0.483108591488058\\
			11.85	0.00252331418020368\\
			11.9	-0.00751742454601612\\
			11.95	-2.12808197925796e-05\\
			12.4	6.42968352515538e-06\\
			14.2	-1.07790178560663e-07\\
		};
		\addlegendentry{MFDOOM}
		
		\addplot [color=black, forget plot]
		table[row sep=crcr]{%
			1.25	-2\\
			1.25	2\\
		};
		\addplot [color=white!15!black, forget plot]
		table[row sep=crcr]{%
			11.75	-2\\
			11.75	2\\
		};
		
		\addplot[area legend, draw=none, fill=white!90!black, fill opacity=0.3, forget plot]
		table[row sep=crcr] {%
			x	y\\
			1.25	-2.5\\
			11.75	-2.5\\
			11.75	2\\
			1.25	2\\
		}--cycle;
	\end{axis}
\end{tikzpicture}
\caption{Velocity tracking error comparison of ODeePC (RMSE = 2.18), MDeePC (RMSE = 0.40) and MFDOOM (RMSE = 0.09) when controlling \cref{eqn:sim} under time-varying harmonic disturbance (gray region).}
\label{fig:LTV_error_response}
\end{figure}

\begin{figure}[h!]\centering
\definecolor{mycolor1}{rgb}{0.85000,0.32500,0.09800}%
\definecolor{mycolor2}{rgb}{0,0.5,0}%
\begin{tikzpicture}
	
	\begin{axis}[%
		width=0.85\columnwidth,   %
		height=0.25\columnwidth,  %
		scale only axis,
		unbounded coords=jump,
		xmin=0,
		xmax=15,
		ymin=-2,
		ymax=2,
		xlabel={Time [s]},
		ylabel={Prediction Error $[rad/s]$},
		axis background/.style={fill=white},
		axis x line*=bottom,
		axis y line*=left,
		xmajorgrids,
		ymajorgrids,
		legend style={
			at={(0.5,1.05)},
			anchor=south,
			legend columns=-1,
			legend cell align=left,
			align=left,
			draw=white!15!black
		}
		]
		\addplot [color=black, thick]
		table[row sep=crcr]{%
			0	-5.32907051820075e-15\\
			1.35	-2.63646882103785e-11\\
			1.4	-0.190094833134188\\
			1.45	-0.366298154125637\\
			1.5	-0.508991097113928\\
			1.55	-0.603553028534909\\
			1.6	-0.639929706102521\\
			1.65	-0.613675102711495\\
			1.7	-0.526473051382611\\
			1.75	-0.386059157851395\\
			1.8	-0.205541929903834\\
			1.9	0.204257853354767\\
			1.95	0.393432220358438\\
			2	0.546419040200643\\
			2.05	0.64761091659768\\
			2.1	0.686304843529946\\
			2.15	0.657827968364181\\
			2.2	0.564081654881857\\
			2.25	0.413442107178039\\
			2.3	0.220018791172821\\
			2.4	-0.218442403339781\\
			2.45	-0.4205665151269\\
			2.5	-0.583846985677203\\
			2.55	-0.691668804701807\\
			2.6	-0.732679981201871\\
			2.65	-0.701980834010907\\
			2.7	-0.601690258383837\\
			2.75	-0.440825056408974\\
			2.8	-0.23449565225496\\
			2.9	0.232626953580008\\
			2.95	0.447700809886106\\
			3	0.621274931159189\\
			3.05	0.73572669278928\\
			3.1	0.779055118847058\\
			3.15	0.746133699615054\\
			3.2	0.639298861912581\\
			3.25	0.468208005618708\\
			3.3	0.248972513319242\\
			3.4	-0.246811503822459\\
			3.45	-0.474835104676556\\
			3.5	-0.65870287661371\\
			3.55	-0.779784580867899\\
			3.6	-0.825430256547524\\
			3.65	-0.790286565287191\\
			3.7	-0.676907465396157\\
			3.75	-0.495590954845394\\
			3.8	-0.263449374364994\\
			3.9	0.260996054079515\\
			3.95	0.501969399454026\\
			4	0.696130822116407\\
			4.05	0.823842469009247\\
			4.1	0.871805394206026\\
			4.15	0.834439430953614\\
			4.2	0.71451606890229\\
			4.25	0.522973904038858\\
			4.3	0.277926235463505\\
			4.4	-0.275180604313954\\
			4.45	-0.5291036942275\\
			4.5	-0.733558767593836\\
			4.55	-0.867900357114467\\
			4.6	-0.918180531883449\\
			4.65	-0.878592296606314\\
			4.7	-0.752124672410879\\
			4.75	-0.550356853275099\\
			4.8	-0.292403096530045\\
			4.9	0.289365154552907\\
			4.95	0.556237989002755\\
			5	0.77098671306582\\
			5.05	0.911958245094095\\
			5.1	0.9645556694988\\
			5.15	0.922745162213269\\
			5.2	0.789733275873825\\
			5.25	0.577739802409877\\
			5.3	0.306879957475386\\
			5.4	-0.303549704847617\\
			5.45	-0.58337228378587\\
			5.5	-0.808414658545164\\
			5.55	-0.956016133319055\\
			5.6	-1.01093080722564\\
			5.65	-0.966898027942925\\
			5.7	-0.82734187942164\\
			5.75	-0.605122751716202\\
			5.8	-0.32135681867336\\
			5.9	0.317734255030631\\
			5.95	0.610506578545385\\
			6	0.845842603995182\\
			6.05	1.0000740214159\\
			6.1	1.05730594487202\\
			6.15	1.01105089353771\\
			6.2	0.864950482920083\\
			6.25	0.632505700907394\\
			6.3	0.335833679729028\\
			6.4	-0.331918805300226\\
			6.45	-0.637640873334988\\
			6.5	-0.883270549519331\\
			6.55	-1.04413190954023\\
			6.6	-1.10368108256926\\
			6.65	-1.05520375920685\\
			6.7	-0.902559086438105\\
			6.75	-0.659888650147055\\
			6.8	-0.350310540812565\\
			6.9	0.346103355527035\\
			6.95	0.664775168100384\\
			7	0.920698494973019\\
			7.05	1.08818979761779\\
			7.1	1.15005622022434\\
			7.15	1.09935662484484\\
			7.2	0.940167689930826\\
			7.25	0.687271599352574\\
			7.3	0.364787401836368\\
			7.4	-0.360287905776993\\
			7.45	-0.691909462885098\\
			7.5	-0.958126440468591\\
			7.55	-1.13224768573631\\
			7.6	-1.1964313579054\\
			7.65	-1.1435094904963\\
			7.7	-0.977776293439781\\
			7.75	-0.714654548578205\\
			7.8	-0.379264262945977\\
			7.9	0.374472455997733\\
			7.95	0.719043757678522\\
			8	0.995554385930479\\
			8.05	1.17630557382892\\
			8.1	1.24280649557232\\
			8.15	1.18766235613268\\
			8.2	1.01538489693976\\
			8.25	0.742037497780865\\
			8.3	0.393741124015985\\
			8.4	-0.388657006244072\\
			8.45	-0.746178052399655\\
			8.5	-1.03298233141451\\
			8.55	-1.22036346194185\\
			8.6	-1.28918163320376\\
			8.65	-1.23181522177134\\
			8.7	-1.05299350043647\\
			8.75	-0.76942044699423\\
			8.8	-0.408217985082693\\
			8.9	0.402841556492323\\
			8.95	0.773312347191736\\
			9	1.0704102769154\\
			9.05	1.2644213500686\\
			9.1	1.33555677090062\\
			9.15	1.27596808743027\\
			9.2	1.09060210394584\\
			9.25	0.796803396204917\\
			9.3	0.422694846152284\\
			9.4	-0.417026106748965\\
			9.45	-0.80044664196385\\
			9.5	-1.10783822235265\\
			9.55	-1.30847923812632\\
			9.6	-1.38193190859022\\
			9.65	-1.32012095307442\\
			9.7	-1.12821070745077\\
			9.75	-0.824186345427043\\
			9.8	-0.4371717072196\\
			9.9	0.43121065697356\\
			9.95	0.827580936745489\\
			10	1.14526616786322\\
			10.05	1.35253712620936\\
			10.1	1.42830704621427\\
			10.15	1.36427381871511\\
			10.2	1.16581931096962\\
			10.25	0.85156929463559\\
			10.3	0.451648568288872\\
			10.4	-0.445395207222058\\
			10.45	-0.854715231511992\\
			10.5	-1.18269411330297\\
			10.55	-1.39659501432152\\
			10.6	-1.47468218391191\\
			10.65	-1.40842668435079\\
			10.7	-1.20342791449393\\
			10.75	-0.878952243858741\\
			10.8	-0.466125429370345\\
			10.9	0.459579757457707\\
			10.95	0.881849526253331\\
			11	1.22012205876129\\
			11.05	1.44065290242681\\
			11.1	1.52105732157779\\
			11.15	1.45257954999603\\
			11.2	1.24103651795946\\
			11.25	0.906335193069021\\
			11.3	0.480602290432621\\
			11.4	-0.473764307711189\\
			11.45	-0.908983821077847\\
			11.5	-1.25755000429848\\
			11.55	-1.48471079051117\\
			11.6	-1.56743245922843\\
			11.65	-1.49673241564722\\
			11.7	-1.27864512145742\\
			11.75	-0.933718142287921\\
			11.8	-0.00991174156306229\\
			11.85	-0.000105216570798206\\
			12.6	1.77635683940025e-15\\
			14.2	0\\
		};
		\addlegendentry{DeePC}
		
		\addplot [color=mycolor1,thick,dashed]
		table[row sep=crcr]{%
			0	0\\
			1.7	0\\
			1.75	-0.924972822192245\\
			1.8	-0.205325843394785\\
			1.9	0.20444690471065\\
			1.95	0.393608807758321\\
			2	0.546573899940775\\
			2.05	0.647733125332213\\
			2.1	0.686383500947375\\
			2.15	0.65785349084989\\
			2.2	0.564047129530518\\
			2.25	0.413344945473439\\
			2.3	0.219862598713922\\
			2.4	-0.218675289724453\\
			2.45	-0.420803943981898\\
			2.5	-0.584062695830674\\
			2.55	-0.691838306482595\\
			2.6	-0.732783202801039\\
			2.65	-0.702003722699228\\
			2.7	-0.601625845511\\
			2.75	-0.440674398737315\\
			2.8	-0.234268609003255\\
			2.9	0.23294257006927\\
			2.95	0.448015629318711\\
			3	0.621555764578705\\
			3.05	0.735942916941424\\
			3.1	0.77918202871253\\
			3.15	0.746154898549626\\
			3.2	0.639207802720213\\
			3.25	0.468008892413609\\
			3.3	0.24868056340326\\
			3.4	-0.247204029669122\\
			3.45	-0.475221885058142\\
			3.5	-0.659044040830501\\
			3.55	-0.780043888574731\\
			3.6	-0.825578977076781\\
			3.65	-0.790306366891791\\
			3.7	-0.676792253723832\\
			3.75	-0.495347741274934\\
			3.8	-0.26309815384926\\
			3.9	0.261459176421655\\
			3.95	0.502422287192971\\
			4	0.696527396690016\\
			4.05	0.824141338017171\\
			4.1	0.871974200648959\\
			4.15	0.83445813748062\\
			4.2	0.71437901158107\\
			4.25	0.522690611179677\\
			4.3	0.277520990759475\\
			4.4	-0.275708483864411\\
			4.45	-0.529617387497149\\
			4.5	-0.734006209272806\\
			4.55	-0.868235650527826\\
			4.6	-0.918368016062391\\
			4.65	-0.87861037492295\\
			4.7	-0.751968049882869\\
			4.75	-0.550037229589572\\
			4.8	-0.291948945228915\\
			4.9	0.289951631859285\\
			4.95	0.556806633839157\\
			5	0.771480377863432\\
			5.05	0.912326432963166\\
			5.1	0.964759639821818\\
			5.15	0.922761964370613\\
			5.2	0.789558035717558\\
			5.25	0.577386451410639\\
			5.3	0.306380231996839\\
			5.4	-0.304190839501038\\
			5.45	-0.583992365986248\\
			5.5	-0.808951571189221\\
			5.55	-0.956415322668901\\
			5.6	-1.01115091030714\\
			5.65	-0.966914820479529\\
			5.7	-0.827150782909184\\
			5.75	-0.604739448437421\\
			5.8	-0.320816158188972\\
			5.9	0.318424947847069\\
			5.95	0.611173003295722\\
			6	0.846418247853016\\
			6.05	1.00050068173029\\
			6.1	1.05753963241787\\
			6.15	1.01106622400455\\
			6.2	0.86474320462097\\
			6.25	0.632093253137334\\
			6.3	0.335253844661127\\
			6.4	-0.332656622603045\\
			6.45	-0.638351675964508\\
			6.5	-0.88388365253066\\
			6.55	-1.04458569799366\\
			6.6	-1.10392901155949\\
			6.65	-1.055219399301\\
			6.7	-0.902338482424941\\
			6.75	-0.659450935313039\\
			6.8	-0.349696214518023\\
			6.9	0.346882932890381\\
			6.95	0.665525150389657\\
			7	0.921344277216932\\
			7.05	1.08866662320656\\
			7.1	1.15031558840258\\
			7.15	1.09937122238481\\
			7.2	0.939933903820592\\
			7.25	0.686809915498616\\
			7.3	0.364140527125066\\
			7.4	-0.361107306521422\\
			7.45	-0.692697258907847\\
			7.5	-0.958804383492176\\
			7.55	-1.13274786661374\\
			7.6	-1.19670299167264\\
			7.65	-1.14352429004225\\
			7.7	-0.97753082536301\\
			7.75	-0.714170811190563\\
			7.8	-0.378587483323667\\
			7.9	0.375328042333892\\
			7.95	0.719865385137938\\
			8	0.996260498685272\\
			8.05	1.17682558932316\\
			8.1	1.24308802386448\\
			8.15	1.18767682077346\\
			8.2	1.01512876125837\\
			8.25	0.741533910589686\\
			8.3	0.393037100737727\\
			8.4	-0.389547309868711\\
			8.45	-0.74703269762543\\
			8.5	-1.03371592095574\\
			8.55	-1.22090313389041\\
			8.6	-1.2894738642649\\
			8.65	-1.23182945170779\\
			8.7	-1.05272727750589\\
			8.75	-0.768897867328812\\
			8.8	-0.407487368206276\\
			8.9	0.403763476157218\\
			8.95	0.774196557015202\\
			9	1.07116863152991\\
			9.05	1.26497852934814\\
			9.1	1.33585733545495\\
			9.15	1.27598187571766\\
			9.2	1.09032650951023\\
			9.25	0.796263376891874\\
			9.3	0.42194051486182\\
			9.4	-0.417977307846007\\
			9.45	-0.801359067216474\\
			9.5	-1.10862053017921\\
			9.55	-1.30905324800616\\
			9.6	-1.38224098825373\\
			9.65	-1.32013453071218\\
			9.7	-1.12792637824541\\
			9.75	-0.823629917286235\\
			9.8	-0.436395164811227\\
			9.9	0.432190297160497\\
			9.95	0.828519684200346\\
			10	1.14607037068449\\
			10.05	1.35312705711801\\
			10.1	1.42862415586183\\
			10.15	1.36428715513288\\
			10.2	1.16552688681901\\
			10.25	0.850997487160489\\
			10.3	0.450850942480661\\
			10.4	-0.446400897633907\\
			10.45	-0.855678942264159\\
			10.5	-1.18351937096071\\
			10.55	-1.39719956102266\\
			10.6	-1.47500664198693\\
			10.65	-1.40843976724555\\
			10.7	-1.20312794321769\\
			10.75	-0.878366495023352\\
			10.8	-0.465308589568059\\
			10.9	0.460610341441521\\
			10.95	0.882836915769685\\
			11	1.22096711098437\\
			11.05	1.44127130036751\\
			11.1	1.52138863501126\\
			11.15	1.45259237206514\\
			11.2	1.24072946146416\\
			11.25	0.905736311156893\\
			11.3	0.479767008994513\\
			11.4	-0.474817852222323\\
			11.45	-0.909992906652461\\
			11.5	-1.25841326539339\\
			11.55	-1.4853421857676\\
			11.6	-1.56777030598642\\
			11.65	-1.49674495283964\\
			11.7	-1.27833141017003\\
			11.75	-0.933106937663469\\
			11.8	-0.494226228775098\\
			11.85	0.922250372889103\\
			11.9	0.0097900171364369\\
			11.95	0.000103923745932377\\
			12.7	5.17922149612104e-08\\
			14.2	3.557154570899e-11\\
		};
		\addlegendentry{$d^*(0)$}
		
		\addplot [color=mycolor2,thick,dotted]
		table[row sep=crcr]{%
			0	0\\
			1.9	0\\
			1.95	-0.956742204150061\\
			2	0.548164157042503\\
			2.05	0.64923305949447\\
			2.1	0.687751385841192\\
			2.15	0.658998932230119\\
			2.2	0.564871318469393\\
			2.25	0.413756867990543\\
			2.3	0.219789279306573\\
			2.4	-0.219816668048876\\
			2.45	-0.422435765829746\\
			2.5	-0.58607164172043\\
			2.55	-0.694042022130333\\
			2.6	-0.734947097141843\\
			2.65	-0.703876160554804\\
			2.7	-0.60297845529405\\
			2.75	-0.44133243223682\\
			2.8	-0.234126090786049\\
			2.9	0.234705753742841\\
			2.95	0.450443589500354\\
			3	0.624446829686228\\
			3.05	0.739027140648208\\
			3.1	0.782145496816755\\
			3.15	0.748676774593662\\
			3.2	0.64100163111249\\
			3.25	0.468857638620207\\
			3.3	0.248459256067589\\
			3.4	-0.24952692508192\\
			3.45	-0.478368928391452\\
			3.5	-0.662737698853304\\
			3.55	-0.783934151867062\\
			3.6	-0.829275736657062\\
			3.65	-0.793421899259494\\
			3.7	-0.678986477804235\\
			3.75	-0.496367341923683\\
			3.8	-0.262803526093608\\
			3.9	0.264289541562837\\
			3.95	0.506222244062322\\
			4	0.700951728703266\\
			4.05	0.828767310377248\\
			4.1	0.876341314411093\\
			4.15	0.838116515360472\\
			4.2	0.716939221665125\\
			4.25	0.52386669977329\\
			4.3	0.277160806681426\\
			4.4	-0.278998552422751\\
			4.45	-0.534009871315574\\
			4.5	-0.739095294583953\\
			4.55	-0.873532330729788\\
			4.6	-0.923348474360299\\
			4.65	-0.88276707047128\\
			4.7	-0.754863803190526\\
			4.75	-0.551358554247154\\
			4.8	-0.291532143803538\\
			4.9	0.29365717904494\\
			4.95	0.561737003228062\\
			5	0.777171436738495\\
			5.05	0.918230122648023\\
			5.1	0.970294477643048\\
			5.15	0.927367599821949\\
			5.2	0.792757261446789\\
			5.25	0.578835987618953\\
			5.3	0.305905837156084\\
			5.4	-0.308282581555995\\
			5.45	-0.589418603308186\\
			5.5	-0.815200843953724\\
			5.55	-0.962886685918011\\
			5.6	-1.01720267433427\\
			5.65	-0.971942205322568\\
			5.7	-0.830637065301165\\
			5.75	-0.606315774809415\\
			5.8	-0.320300444491359\\
			5.9	0.322857045506005\\
			5.95	0.617042861282089\\
			6	0.853169644144396\\
			6.05	1.00748010703489\\
			6.1	1.06405879664549\\
			6.15	1.01647140344974\\
			6.2	0.868482845889554\\
			6.25	0.633775621649242\\
			6.3	0.33468748250891\\
			6.4	-0.337417521743989\\
			6.45	-0.644645048279788\\
			6.5	-0.89111167477374\\
			6.55	-1.0520489938949\\
			6.6	-1.11089374348895\\
			6.65	-1.06099030555872\\
			6.7	-0.906330843599207\\
			6.75	-0.661251321208901\\
			6.8	-0.349102280190941\\
			6.9	0.351929746167649\\
			6.95	0.672191832758115\\
			7	0.928994620057985\\
			7.05	1.09655877368455\\
			7.1	1.15767316523704\\
			7.15	1.1054603922088\\
			7.2	0.944138510719322\\
			7.25	0.688696984374706\\
			7.3	0.363503203448619\\
			7.4	-0.366425488897692\\
			7.45	-0.699713238852151\\
			7.5	-0.966849997538317\\
			7.55	-1.1410438849411\\
			7.6	-1.20443588068468\\
			7.65	-1.14992506677681\\
			7.7	-0.981954218366191\\
			7.75	-0.716162344243385\\
			7.8	-0.377929546646277\\
			7.9	0.380892060840644\\
			7.95	0.72720396481934\\
			8	1.00467035011165\\
			8.05	1.18549292250942\\
			8.1	1.25116116471468\\
			8.15	1.19435224409981\\
			8.2	1.01973520178935\\
			8.25	0.743600697404949\\
			8.3	0.392343729930401\\
			8.4	-0.395334143933256\\
			8.45	-0.754660180898972\\
			8.5	-1.04245567081393\\
			8.55	-1.22991974911678\\
			8.6	-1.29787241791583\\
			8.65	-1.23877704525408\\
			8.7	-1.05751978553134\\
			8.75	-0.77105036535054\\
			8.8	-0.40677950845533\\
			8.9	0.409760441327991\\
			8.95	0.782100582432843\\
			9	1.08022925151637\\
			9.05	1.27431112520348\\
			9.1	1.344554332082\\
			9.15	1.28317161213867\\
			9.2	1.0952834418824\\
			9.25	0.798486978034971\\
			9.3	0.421201975578649\\
			9.4	-0.424168392949083\\
			9.45	-0.809515818999763\\
			9.5	-1.11796778458012\\
			9.55	-1.31868634167019\\
			9.6	-1.39121322730031\\
			9.65	-1.32755641052786\\
			9.7	-1.1330459137557\\
			9.75	-0.825926133472496\\
			9.8	-0.435636555559274\\
			9.9	0.438560566428313\\
			9.95	0.83691289433319\\
			10	1.15568573109207\\
			10.05	1.3630390110568\\
			10.1	1.43786369579\\
			10.15	1.37192577119646\\
			10.2	1.17079403067798\\
			10.25	0.853364129010334\\
			10.3	0.450075398477203\\
			10.4	-0.452937673387931\\
			10.45	-0.864292512183008\\
			10.5	-1.19338729031448\\
			10.55	-1.40737359010122\\
			10.6	-1.48449100579992\\
			10.65	-1.41628458748455\\
			10.7	-1.20853906959807\\
			10.75	-0.880796430389147\\
			10.8	-0.464515777547261\\
			10.9	0.46730139562813\\
			10.95	0.891651340224284\\
			11	1.23106576985987\\
			11.05	1.45168857367402\\
			11.1	1.53110700267479\\
			11.15	1.46063337745455\\
			11.2	1.24627653411795\\
			11.25	0.908227680436076\\
			11.3	0.478958805785153\\
			11.4	-0.48165328220812\\
			11.45	-0.918996094400841\\
			11.5	-1.26873226178104\\
			11.55	-1.49599159157033\\
			11.6	-1.57770418090661\\
			11.65	-1.50496511750412\\
			11.7	-1.28400279851707\\
			11.75	-0.935656451418801\\
			11.8	-0.49340553101711\\
			11.9	0.495994276019502\\
			11.95	0.946327300355406\\
			12	1.3063868434606\\
			12.031220266022	2.4\\
			nan	nan\\
			12.0608719500142	2.4\\
			12.1	0.032460024683985\\
			12.15	0.000343897334978749\\
			12.4	-1.71823579364627e-07\\
			14.2	-9.58948476181831e-11\\
		};
		\addlegendentry{$d^*(N-1)$}
		
		\addplot [color=black, forget plot]
		table[row sep=crcr]{%
			1.25	-2\\
			1.25	2\\
		};
		\addplot [color=white!15!black, forget plot]
		table[row sep=crcr]{%
			11.75	-2\\
			11.75	2\\
		};
		
		\addplot[area legend, draw=none, fill=white!90!black, fill opacity=0.3, forget plot]
		table[row sep=crcr] {%
			x	y\\
			1.25	-2\\
			11.75	-2\\
			11.75	2\\
			1.25	2\\
		}--cycle;
	\end{axis}
\end{tikzpicture}%
\caption{DeePC output prediction error due to a time-varying harmonic disturbance compared to the 0-step and $(N-1)$-step error estimations found by MFDOOM.}
\label{fig:LTV_disturbance_response}
\end{figure}
Notably, MFDOOM gains require no adjustments. In comparison, MDeePC must set $K_g=10,000$ to retain stability, which inevitably harms performance. ODeePC is stable with its previous $K_g$, but, as expected, suffers greatly from discontinuities when the disturbance is turned on/off.

\section{CONCLUSIONS}\label{sec:concl}
We presented our Model-Free Disturbance Observer with Online Modification (MFDOOM) method, which builds on DeePC \citep{DeePC} and ODeePC \citep{ODeePC}. MFDOOM introduces a dedicated, continuously updated Hankel matrix constructed from past prediction errors. This matrix serves as an implicit model of the prediction-error dynamics in DeePC and enables compensating for them online. Our analysis and simulations show that MFDOOM can outperform existing approaches when disturbances are generated by autonomous LTV systems. Although the current formulation is most suitable for such disturbances, we see significant potential for extending MFDOOM’s applicability. By further developing the adaptive gain mechanism or by combining mosaic and shifting matrices, it may become possible to handle a broader class of systems in the future.

\ack
This work was conducted while the second author was a Jane and Larry Sherman Fellow. It was further supported by the Israel Science Foundation (grant no.~2406/22) and the Bernard M. Gordon Center for Systems Engineering at the Technion--IIT.

\bibliography{ifacconf.bib}             %

@article{ODeePC,
title = {Online data-enabled predictive control},
journal = {Automatica},
volume = {138},
pages = {109926},
year = {2022},
author = {Stefanos Baros and Chin-Yao Chang and Gabriel E. Colón-Reyes and Andrey Bernstein},
}

@INPROCEEDINGS{DeePC,
  author={Coulson, Jeremy and Lygeros, John and Dörfler, Florian},
  booktitle={2019 18th European Control Conference (ECC)}, 
  title={Data-enabled predictive control: in the shallows of the {DeePC}}, 
  year={2019},
  volume={},
  number={},
  pages={307-312},
}

@article{Identifiability,
  author={Markovsky, Ivan and Dörfler, Florian},
  journal={IEEE Transactions on Automatic Control}, 
  title={Identifiability in the behavioral setting}, 
  year={2023},
  volume={68},
  number={3},
  pages={1667-1677}}

@book{MPC,
  author    = {Eduardo F. Camacho and Carlos Bordons},
  title     = {Model Predictive Control},
  edition   = {2nd},
  publisher = {Springer-Verlag London Limited},
  address   = {London},
  year      = {2000},
}

@article{Regularization,
title = {Quadratic regularization of data-enabled predictive control: theory and application to power converter experiments},
journal = {IFAC-PapersOnLine},
volume = {54},
number = {7},
pages = {192-197},
year = {2021},
note = {19th IFAC Symposium on System Identification SYSID 2021},
author = {Linbin Huang and Jianzhe Zhen and John Lygeros and Florian Dörfler},
}

@article{Persistency,
title = {A note on persistency of excitation},
journal = {Systems \& Control Letters},
volume = {54},
number = {4},
pages = {325-329},
year = {2005},
author = {Jan C. Willems and Paolo Rapisarda and Ivan Markovsky and Bart L.M. {De Moor}},
}

@article{MDeePC,
  author={Vahidi-Moghaddam, Amin and Zhang, Kaixiang and Yin, Xunyuan and Srivastava, Vaibhav and Li, Zhaojian},
  journal={IEEE Transactions on Automation Science and Engineering}, 
  title={Online reduced-order data-enabled predictive control}, 
  year={2025},
  volume={22},
  number={},
  pages={22455-22467},
}

@article{ChenDeePC2025,
	author = {Chen, Jingshan and Ebel, Henrik and Eberhard, Peter},
	journal = {Journal of Intelligent \& Robotic Systems},
	number = {2},
	pages = {32},
	title = {Practical Insights on Data-Based Robot Control: A Comparative Analysis of Data-Enabled Predictive Control and Model-Based Predictive Control},
	volume = {112},
	year = {2026}}

@article{elokda2021data,
  title={Data-enabled predictive control for quadcopters},
  author={Elokda, Ezzat and Coulson, Jeremy and Beuchat, Paul N and Lygeros, John and D{\"o}rfler, Florian},
  journal={International Journal of Robust and Nonlinear Control},
  volume={31},
  number={18},
  pages={8916--8936},
  year={2021},
  publisher={Wiley Online Library},
}

@article{huang2023robust,
  title={Robust data-enabled predictive control: Tractable formulations and performance guarantees},
  author={Huang, Linbin and Zhen, Jianzhe and Lygeros, John and D{\"o}rfler, Florian},
  journal={IEEE Transactions on Automatic Control},
  volume={68},
  number={5},
  pages={3163--3170},
  year={2023},
  publisher={IEEE},
}

@article{huang2021decentralized,
  title={Decentralized data-enabled predictive control for power system oscillation damping},
  author={Huang, Linbin and Coulson, Jeremy and Lygeros, John and D{\"o}rfler, Florian},
  journal={IEEE Transactions on Control Systems Technology},
  volume={30},
  number={3},
  pages={1065--1077},
  year={2021},
  publisher={IEEE},
}

@misc{zieglmeier2025data,
  title={Data-enabled predictive control and guidance for autonomous underwater vehicles},
  author={Zieglmeier, Sebastian and de Badyn, Mathias Hudoba and Warakagoda, Narada D and Krogstad, Thomas R and Engelstad, Paal},
  note={arXiv:2510.25309},
  year={2025},
}

\end{document}